\documentclass[leqno,11pt]{amsart}

\usepackage{amssymb}
\usepackage{amsmath}
\usepackage{enumerate}
\usepackage[pdftex]{graphicx}
\usepackage{graphicx, color}

\def\s{\mathbb{S}}
\def\N{\mathbb{N}}
\def\R{\mathbb{R}}

\def\C{\mathbb{C}}

\newtheorem{remark}{Remark}[section]
\newtheorem{theorem}{Theorem}[section]
\newtheorem{proposition}{Proposition}[section]
\newtheorem{corollary}{Corollary}[section]
\newtheorem{lemma}{Lemma}[section]
\newtheorem{example}{Example}[section]

\numberwithin{equation}{section}

\hyphenation{ mi-ni-mal e-xam-ples de-fi-ni-tion cons-truc-tions
cri-ti-cal bet-ween cons-truc-ted pro-blem ma-ni-folds o-rien-ted
ge-ne-ra-ted iso-me-try ta-king sy-mme-tric geo-me-try
cha-rac-te-ris-tic o-pe-ra-tor Her-mi-tian e-xis-ten-ce con-gruent Le-gen-drian
in-fi-ni-te-si-mal pa-ra-llel sta-tio-na-ry par-ti-cu-lar con-si-de-ring arc-length
co-rres-pon-ding sub-ma-ni-fold in-te-res-ted ca-rac-te-ri-zed ne-ce-ssa-ri-ly ri-gi-di-ty des-cri-bed
va-ni-shes cons-tant coin-ci-de non-ne-ga-ti-ve cha-rac-te-ri-ze arri-ve sphe-ri-cal sub-ma-ni-fold}


\begin{document}

\title[Homothetic solitons for the IMCF]{Homothetic solitons \\ for the inverse mean curvature flow}

\author{Ildefonso Castro}
\address{Departamento de Matem\'{a}ticas \\
Universidad de Ja\'{e}n \\
23071 Ja\'{e}n, SPAIN} \email{icastro@ujaen.es}

\author{Ana M.~Lerma}
\address{Departamento de Did\'{a}ctica de las Ciencias \\
Universidad de Ja\'{e}n \\
23071 Ja\'{e}n, SPAIN} \email{alerma@ujaen.es}

\thanks{Research partially supported by a MEC-Feder grant
MTM2014-52368-P}

\subjclass[2000]{Primary 53C42, 53B25; Secondary 53D12}

\keywords{Inverse mean curvature flow, solitons, Lagrangian submanifolds, Clifford torus.}

\date{}

\begin{abstract}
We study solutions to the inverse mean curvature flow which evolve by homotheties of a given submanifold with arbitrary dimension and codimension. We first show that the closed ones are necessarily spherical minimal immersions and so we reveal the strong rigidity of the Clifford torus in this setting. Mainly we focus on the Lagrangian case, obtaining numerous examples and uniqueness results for some products of circles and spheres that generalize the Clifford torus to arbitrary dimension. We also characterize the pseudoumbilical ones in terms of soliton curves for the inverse curve shortening flow and minimal Legendrian immersions in odd-dimensional spheres. As a consequence, we classify the rotationally invariant Lagrangian homothetic solitons for the inverse mean curvature flow.
\end{abstract}

\maketitle


\section{Introduction}

A smooth family of immersions $F=F(\cdot,t)$ into Euclidean space evolves by the inverse mean curvature flow if
\begin{equation*} 
\left( \frac{\partial F}{\partial t}\right)^\bot = -\frac{H}{|H|^2} 
\end{equation*}
where $H$ is the mean curvature vector of $F$ and $^\bot$ denotes the normal component. 
There are many interesting papers on the inverse mean curvature flow (shortly IMCF) in the case of hypersurfaces; see for example \cite{Ge90}, \cite{HI08}, \cite{Sm90} and \cite{Ur90}. We emphasize two very important applications of the IMCF: on the one hand, Huisken and Ilmanen (\cite{HI97}, \cite{HI01}) used a level set approach for the IMCF and were able to prove the Riemannian Penrose inequality; on the other hand, using IMCF Bray and Neves (\cite{BN04}) proved the Poincar\'{e} conjecture for 3-manifolds with Yamabe invariant greater than real projective 3-space.

Round hyperspheres in Euclidean space expand under the IMCF with an exponentially increasing radius. When we look for the condition on an immersion $\phi : M^n \rightarrow \R^m$ evolving by homotheties under the IMCF, we get that the evolution is given necessarily by $F(x,t)=e^{a t}\phi (x), t\geq 0$, for some $ a\neq 0$, and $\phi$ has to verify  the following equation:
\begin{equation}\label{IMCFsoliton}
-\frac{H}{|H|^2}=a\, \phi^\bot , \  a\neq 0 \tag{*}
\end{equation}
where $H={\rm trace}\,\sigma$ is the nonvanishing mean curvature vector of the immersion $\phi$ ($\sigma$ is the second fundamental form) and $^\bot$ denotes the projection onto the normal bundle of $M$. 
The solutions of (\ref{IMCFsoliton}) are called {\em homothetic solitons for the inverse mean curvature flow with constant velocity $a\neq 0$}.
We remark that (\ref{IMCFsoliton}) is invariant by dilations of $\phi$.
Hence the solutions of (\ref{IMCFsoliton}) give rise to homothetically shrinking or expanding solutions of the inverse mean curvature flow  according to $a<0$ or $a>0$. In this way, solutions of (\ref{IMCFsoliton}) with $a<0$
(resp.\ $a>0$) are simply called {\em shrinkers} (resp.\ {\em expanders}) {\em for the IMCF.}

When $n=1$ and $m=2$, the solutions of (\ref{IMCFsoliton}) are explicitly described in \cite{DLW15}. The corresponding
plane curves include the classical logarithmic spirals and involutes of circles as expanders and the only closed ones
are the circles centered at the origin (\cite{And03}).
When $m=n+1$ and $n\geq 2$, it is also proved in \cite{DLW15} that the hyperspheres centered at the origin are
exceptionally rigid since they are the only closed homothetic soliton hypersurfaces for the inverse mean curvature flow. However, in higher codimension, Drugan, Lee and Wheeler also observed in \cite{DLW15} that any minimal submanifold of the standard hypersphere is an expander for the IMCF.

We show in Theorem~\ref{closed} that the closed homothetic solitons for the inverse mean curvature flow are necessarily spherical minimal submanifolds. Making use of this fact, we also reveal a strong rigidity for the Clifford torus $\s^1 \times \s^1$ in the class of closed homothetic solitons for the IMCF by showing that it is the only embedded torus in $\R^4$ (Corollary~\ref{CliffordBrendle}), the only (besides the two-sphere) with constant Gauss curvature in $\R^4$ (Corollary~\ref{CliffordLawson}) and, surprisingly, the only equality case for a pinching on $|\sigma|^2/|H|^2$ in general dimension with equal codimension (Corollary~\ref{Cor IMRN14}).

If we pay attention to the Lagrangian homothetic solitons for the IMCF in $\R^{2n}\equiv\C^n$ ($n\geq 2$), 
we show in Theorem~\ref{ClosedLagr} that the closed ones are specifically Hopf immersions over minimal Lagrangian compact submanifolds in the complex projective space $\C P^{n-1}$. In this way we first intensify the strong rigidity of the Clifford torus in this setting proving it is the only closed Lagrangian in $\C^2$ (Corollary~\ref{ClosedLagrDim2}). Moreover, we provide several uniqueness results (see Corollaries \ref{ClosedLagrKDim3}, \ref{ClosedLagrKcte}, \ref{ClosedLagrPinching} and \ref{ClosedLagrSO(n)}) for the product of $n$ circles 
$\s^1 \times \dots \times \s^1 \hookrightarrow \C^n$ and the Hopf immersion
$\s^1 \times \s^{n-1} \rightarrow \C^n$, $ (e^{it},(x_1,\dots,x_n)) \mapsto  \sqrt n e^{it}\,(x_1,\dots,x_n)$, that generalize in two different ways the Clifford torus $\s^1 \times \s^1 \subset \C^2$ to arbitrary dimension
via Lagrangian immersions.

In the last section of the paper we confirm that the class of non compact Lagrangian homothetic solitons for the IMCF is also huge. They include not only product submanifolds  $\s^1 \times \stackrel{k)}{\dots} \times \,\s^1 \!\times \R^{n-k} \hookrightarrow \C^{n}$, $1\leq k \leq n-1$, with constant velocity $a=1/k$, but also generalized cylinders for arbitrary $a\neq 0$ (see (\ref{IMCFsoliton})) of type $\mathcal C \times \R^{n-1}\hookrightarrow \C^{n}$, where $\mathcal C$ is a homothetic soliton for the inverse curve shortening flow  with $a\neq 0$ the constant velocity of $\mathcal C$. Moreover, we characterize in Theorem~\ref{LagrPseudo} the pseudoumbilical ones (cf.\ \cite{CMU01} and \cite{RU98}) in terms of powers of soliton curves for the inverse curve shortening flow and minimal Legendrian immersions in odd-dimensional spheres. As a consequence, we classify in Corollary \ref{LagrRev} the rotationally invariant Lagrangian homothetic solitons for the IMCF.

\vspace{0.3cm}

\section{Solitons for the inverse mean curvature flow}

Let $\phi : M^n \rightarrow \R^m$ be an isometric immersion of an $n$-dimensional manifold in Euclidean $m$-space with
nonvanishing mean curvature vector $H$. We consider $H={\rm trace}\,\sigma$, where $\sigma $ is the second
fundamental form of $\phi $. The Euclidean metric in $\R^m$ will be denoted by $\langle \cdot, \cdot \rangle$. The submanifold $M$ in $\R^m$ is called a {\em homothetic soliton for the inverse mean
curvature flow} if there exists a constant $a\neq 0$ satisfying
\begin{equation}\label{IMCFsol}
-\frac{H}{|H|^2}=a\, \phi^\bot
\end{equation}
where $^\bot$ stands for the projection onto the normal bundle of $M$. This definition obeys the fact that
\[ F(x,t)=e^{a t}\phi (x), t\geq 0 \]
is then a solution to the inverse mean curvature flow (IMCF for short) given by
\[ \left( \frac{\partial F}{\partial t}\right)^\bot = -\frac{H}{|H|^2} .  \]
Thus $a$ is said to be the {\em velocity constant} of $\phi$. When $a>0$ the soliton is simply called an {\em expander
for the IMCF} and when $a<0$ a {\em shrinker for the IMCF}.

It is remarkable that (\ref{IMCFsol}) is invariant by homotheties; that is, if $\phi $ is a homothetic soliton for the IMCF with velocity constant $a$, for any $\rho >0$ the rescaled immersion $\rho\,  \phi$ is a
homothetic soliton for the IMCF with the same velocity constant $a$.
For this reason, the possible classifications results on homothetic solitons for the IMCF must be done modulo dilations and the necessary hypothesis must be invariant by homotheties.

The solutions of (\ref{IMCFsol}) when $n=1$ and $m=2$ are known as homothetic solitons for the inverse curve shortening flow. They are explicitly described in \cite{DLW15} and include the classical logarithmic spirals and involutes of circles as expanders; the only closed ones are the circles centered at the origin (\cite{And03}).
When $n\geq 2$, the simplest examples of expander hypersurfaces for the IMCF are (modulo homotheties) the standard sphere $\s^n \subset
\R^{n+1}$ ($a=1/n$) and the cylinders $\s^k \times \R^{n-k} \subset \R^{n+1}$ ($a=1/k$), $1\leq k \leq n-1$. 

\begin{remark}\label{First examples}
{\rm Spherical minimal submanifolds $\phi:M^n \rightarrow \s^{m-1}(R) \subset \R^m$ satisfy $H=-(n/R^2)\phi$ and then it is clear that they are expanders for the IMCF with $a=1/n$. We notice that $|H| |\phi|=n$ in this case.}
\end{remark}

In higher
codimension we make use of Remark~\ref{First examples} to point out, up to homotheties, the following compact examples:
\begin{example}\label{Cliford}
For any $n_1,n_2\in \N$ such that $n_1+n_2=n$, the Clifford immersion
\[
\s^{n_1}({\sqrt{n_1} }) \times \s^{n_2}({\sqrt{n_2} }) \hookrightarrow \R^{n+2}
\]
is a compact expander for the IMCF ($a=1/n$) in $\R^{n+2}$ with $|\sigma|^2 / |H|^2 =2/n$.
\end{example}
\begin{example}\label{ProductCircles}
The product of $n$ circles
\[
\s^1 \times \stackrel{n)}{\dots} \times \s^1 \hookrightarrow \R^{2n}
\]
is a compact flat expander for the IMCF ($a=1/n$) with $|\sigma|^2 / |H|^2 =1 $ in $\R^{2n}$.
\end{example}
\begin{example}\label{Anciaux0}
The Hopf immersion
\[
\s^1 \times \s^{n-1} \rightarrow \C^n\equiv\R^{2n}, \quad (e^{it},(x_1,\dots,x_n)) \mapsto  \sqrt n \, e^{it}\,(x_1,\dots,x_n)
\]
is a compact expander for the IMCF ($a=1/n$) with $|\sigma|^2 / |H|^2 = (3n-2)/n^2$ in $\R^{2n}\equiv\C^n$.
\end{example}
We notice that the Example~\ref{Cliford}, Example~\ref{ProductCircles} and Example~\ref{Anciaux0} coincide when
$n=2$ providing precisely the well-known {\em Clifford torus} $\,\s^1 \times \s^1$ in $\R^4$. 

We deduce the following geometric properties from the definition of homothetic soliton for the IMCF.
\begin{lemma}\label{properties}
Let $\phi : M^n \rightarrow \R^m$ be a homothetic soliton for the inverse mean curvature flow with constant velocity
$a$. Then:
\begin{enumerate}
\item[(a)] $\langle H, \phi \rangle =-1/a$.
\item[(b)] $\triangle |\phi|^2 =2 (n-1/a)$.
\end{enumerate}
\end{lemma}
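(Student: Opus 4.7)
The plan is to derive both identities directly from the soliton equation $-H/|H|^2 = a\,\phi^\bot$ together with two standard facts for isometric immersions, namely $\langle \phi^\bot, H\rangle = \langle \phi, H\rangle$ (since $H$ is normal) and the Beltrami-type formula $\Delta\phi = H$.

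For part (a), I would simply take the inner product of the defining equation \eqref{IMCFsol} with the mean curvature vector $H$. On the left-hand side this yields $-\langle H,H\rangle/|H|^2 = -1$, while on the right-hand side, using that $H$ is normal so that $\langle H,\phi^\bot\rangle = \langle H,\phi\rangle$, we get $a\,\langle H,\phi\rangle$. Solving gives $\langle H,\phi\rangle = -1/a$.

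For part (b), the idea is to apply the Laplace--Beltrami operator of $M$ to $|\phi|^2 = \langle \phi,\phi\rangle$. A direct computation gives
\begin{equation*}
\Delta|\phi|^2 = 2\,|\nabla\phi|^2 + 2\langle \phi,\Delta\phi\rangle.
\end{equation*}
Since $\phi$ is an isometric immersion, one has $|\nabla\phi|^2 = n$ (the $n$ comes from tracing the identity on $TM$), and the classical identity $\Delta\phi = H$ holds. Combining these with part (a) yields $\Delta|\phi|^2 = 2n + 2\langle \phi,H\rangle = 2n - 2/a = 2(n-1/a)$.

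There is no real obstacle here; both statements are immediate consequences of the soliton equation, the normality of $H$, and the Beltrami formula $\Delta\phi = H$. The only point worth being careful about is invoking the correct sign conventions for $H = \operatorname{trace}\sigma$ and for the Laplacian, which are already fixed at the start of Section~2 of the paper.
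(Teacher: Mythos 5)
Your proof is correct and follows essentially the same route as the paper: part (a) is the inner product of the soliton equation with $H$, and part (b) is the standard computation $\triangle|\phi|^2=2(\langle H,\phi\rangle+n)$ via the Beltrami formula $\triangle\phi=H$, which is exactly the ``easy computation'' the authors invoke. No gaps.
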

\begin{proof}
Using (\ref{IMCFsol}) we have that $|H|^2=-a|H|^2 \langle \phi , H \rangle$ and part (a) follows recalling that $H$ is
nonvanishing.  On the other hand, using that $H={\rm trace}\,\sigma$, an easy computation gives us that the Laplacian
of the squared norm of $\phi$ is given by $\triangle |\phi|^2 =2 (\langle H, \phi \rangle+n)$ and, using (a), we
conclude part (b).
\end{proof}
\begin{remark}\label{iff}
{\rm It is proved in Proposition 3 of \cite{DLW15} that condition (a) in Lemma~\ref{properties} characterizes the homothetic solitons for the IMCF of codimension one, including plane curves.}
\end{remark}

As a consequence of Lemma~\ref{properties}, we get the following characterization of the closed homothetic solitons for the IMCF (see
Remark~\ref{First examples}).
\begin{theorem}\label{closed}
Let $\phi : M^n \rightarrow \R^m$ be a homothetic soliton for the inverse mean curvature flow with constant velocity
$a$. If $M$ is closed, then $a=1/n$ (i.e.\ $\phi$ is an expander for the IMCF) and $\phi$ must be a spherical minimal immersion, that is, $M $ is a minimal submanifold in some $(m-1)$-sphere (centered at the origin). 
\end{theorem}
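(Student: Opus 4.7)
The plan is to exploit Lemma~\ref{properties}, particularly part (b), together with the compactness of $M$, and then read off the minimality from the soliton equation once we know $M$ is spherical.

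First I would integrate the identity $\triangle |\phi|^2 = 2(n-1/a)$ from Lemma~\ref{properties}(b) over the closed manifold $M$. Since the Laplacian of any smooth function integrates to zero on a closed Riemannian manifold (via the divergence theorem, using that $\triangle |\phi|^2 = \mathrm{div}\,\nabla |\phi|^2$), the constant $2(n - 1/a)$ must vanish, forcing $a = 1/n$. In particular $\phi$ is an expander.

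Next, with $a = 1/n$, the same identity gives $\triangle |\phi|^2 \equiv 0$, so $|\phi|^2$ is a harmonic function on the closed manifold $M$ and hence constant, say $|\phi|^2 = R^2$. This means $\phi(M) \subset \s^{m-1}(R)$, i.e., $M$ sits inside the round sphere of radius $R$ centered at the origin.

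Finally I would verify minimality inside this sphere. Since $\phi$ is orthogonal to $T_xM$ (being normal to $\s^{m-1}(R)$), we have $\phi^\bot = \phi$, so the soliton equation~\eqref{IMCFsol} reads $-H/|H|^2 = a\,\phi$, making $H$ proportional to $\phi$. Taking the squared norm and combining with Lemma~\ref{properties}(a) (or simply taking the inner product with $\phi$) yields $|H|^2 = 1/(a^2 R^2) = n^2/R^2$ and therefore $H = -(n/R^2)\,\phi$. Because the mean curvature vectors in $\R^m$ and in $\s^{m-1}(R)$ differ exactly by the term $-(n/R^2)\phi$ coming from the second fundamental form of the sphere, this last identity is equivalent to the mean curvature vector of $\phi$ in $\s^{m-1}(R)$ being zero, i.e., $\phi$ is a spherical minimal immersion.

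The only potentially delicate point is confirming that $\phi$ being everywhere orthogonal to $TM$ on the sphere indeed upgrades the soliton equation to the sphere-minimality equation $H = -(n/R^2)\phi$; but once $|\phi|$ is constant this is an immediate consequence of the classical decomposition of the ambient mean curvature vector, so no real obstacle arises.
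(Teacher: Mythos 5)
Your argument is correct and follows essentially the same route as the paper: integrate Lemma~\ref{properties}(b) over the closed manifold to force $a=1/n$ and the constancy of $|\phi|$, then use $\phi^\bot=\phi$ together with the decomposition $H=\hat H-(n/R^2)\phi$ of the ambient mean curvature vector to conclude $\hat H=0$. Your explicit computation of $|H|^2=n^2/R^2$ just spells out the step the paper phrases as ``as $\phi$ is an expander we deduce $\hat H=0$.''
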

\begin{proof}
We use Lemma~\ref{properties} when $M$ is closed. By integrating (b) we easily obtain that $a=1/n$ and, in
addition, $|\phi|^2$ is harmonic, hence constant, say $|\phi|\equiv R$. So we have that $\phi : M^n \rightarrow \s^{m-1}(R)\subset \R^m$ is a spherical immersion with mean curvature vector $\hat H$. Then $H=\hat H -(n/R^2)\phi$. It is also clear that $\phi^\bot=\phi$ in this case: in general, $\phi ^\top= \frac{1}{2}\nabla |\phi |^2$ (where $\nabla $ means gradient with respect to the induced metric on $M$ and $^\top $ denotes projection onto the tangent bundle) so that $\phi ^\top =0$ since $|\phi |$ is constant. As $\phi $ is an
expander for the IMCF we deduce that $\hat H =0$, that is, $M $ is a minimal submanifold in the $(m-1)$-sphere of
radius $R$ centered at the origin.
\end{proof}

Example~\ref{ProductCircles} and Example~\ref{Anciaux0} belong to a special class of submanifolds of codimension $n$:
the {\em Lagrangian submanifolds}. An immersion $\phi :M^n \rightarrow \R^{2n}\equiv \C^n$ is said to be Lagrangian if the
restriction to $M$ of the Kaehler two-form $\omega (\,\cdot\, ,\,\cdot\,)=\langle J\cdot,\cdot\rangle$ of $\C^n$
vanishes. Here $J$ is the complex structure on $\mathbb{C}^n$ that defines a bundle isomorphism between the tangent and
the normal bundle of $\phi $ for a Lagrangian submanifold.  On the other hand, the mean curvature vector of a
Lagrangian submanifold can be written as 
\begin{equation}\label{Hlagr}
H=J\nabla \theta 
\end{equation} 
where $\theta:M\rightarrow \mathbb{R} /2\pi\mathbb{Z}$
is the so-called Lagrangian angle map of $\phi$.

\vspace{0.3cm}

\section{Closed expanders for the inverse mean curvature flow}

In this section we make use of Theorem~\ref{closed} to get some uniqueness results in the class of the closed homothetic solitons for the IMCF described in the previous section.  The first direct application is considering $m=n+1$, which implies that $M^n$ would be a closed (i.e.\ compact without boundary) submanifold in $\s^n(R)$. In this way, we deduce the strong rigidity of the spheres in this setting, first proved in Theorem~12 of \cite{DLW15}:

\begin{corollary}\label{spheres}
Let $M^n$ be a homothetic soliton hypersurface for the inverse mean curvature flow in $\R^{n+1}$ ($n\geq 2$). If $M$ is closed then $M$ is, up to dilations, the round hypersphere $\s^n$ (centered at the origin).
\end{corollary}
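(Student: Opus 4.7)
The strategy is to feed the corollary's hypothesis directly into Theorem~\ref{closed} and then exploit a dimension-count: in codimension one the spherical minimal condition is vacuous, forcing $\phi$ to be essentially surjective onto a round sphere.

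First I would apply Theorem~\ref{closed} with $m=n+1$. The conclusion is that, after possibly translating nothing (the theorem centres everything at the origin automatically), there exists $R>0$ such that $|\phi|\equiv R$ and $\phi:M^n\to \s^n(R)\subset\R^{n+1}$ is a minimal isometric immersion of an $n$-manifold into the $n$-sphere of radius $R$. At this point the ``minimality'' hypothesis carries no information, since the normal bundle of $\phi$ inside $\s^n(R)$ has rank zero; the real content is that $\phi$ takes values in $\s^n(R)$ and has the same dimension as the target.

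Next I would argue that $\phi$ is a covering map onto $\s^n(R)$. Because $\phi$ is an isometric immersion between Riemannian manifolds of equal dimension, it is a local isometry, hence a local diffeomorphism; in particular $\phi(M)$ is open. Since $M$ is closed (compact without boundary), $\phi(M)$ is also compact, therefore closed in $\s^n(R)$. By connectedness of $\s^n(R)$ we get $\phi(M)=\s^n(R)$, and a proper local diffeomorphism onto a connected manifold is a covering map. The hypothesis $n\ge 2$ enters here in an essential way: $\s^n(R)$ is then simply connected, so the covering $\phi$ must be a diffeomorphism, and $M$ is (up to the scaling freedom in~\eqref{IMCFsol}) the round hypersphere $\s^n$ centred at the origin.

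The only subtlety I anticipate is the very last step. Without $n\ge 2$ one could a priori pick up a nontrivial cover of $\s^1$; in our setting this is excluded both by the dimension assumption and by the fact that $M$ is an embedded/immersed closed manifold of the stated dimension, but it is worth flagging explicitly to make the logic watertight. Everything else is formal bookkeeping on top of Theorem~\ref{closed}.
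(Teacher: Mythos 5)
Your proof is correct and follows essentially the same route as the paper, which presents this corollary as a direct application of Theorem~\ref{closed} with $m=n+1$: a closed $n$-manifold minimally (hence vacuously) immersed in $\s^n(R)$ must be all of $\s^n(R)$. Your covering-space argument (local isometry, image open and closed, properness, simple connectivity of $\s^n$ for $n\geq 2$) just makes explicit the topological step the paper leaves to the reader, and it is sound.
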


The second direct application, also commented in \cite{DLW15}, is deduced thanks to the Lawson's construction (cf.\
\cite{La70}) of minimal surfaces in $\s^3$: {\em For any integer $g\geq 1$, there exists at least one two-dimensional
compact embedded expander for the IMCF of genus $g$ in $\R^4$}. The Brendel proof of Lawson's conjecture (cf.\ \cite{Br13})
translates into the following characterization of the Clifford torus $\s^1 \times \s^1$.

\begin{corollary}\label{CliffordBrendle}
Let $M^2$ be a homothetic soliton for the inverse mean curvature flow in $\R^4$. If $M$ is an embedded torus then $M$ is the Clifford torus.
\end{corollary}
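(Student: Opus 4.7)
The plan is to combine Theorem~\ref{closed} with Brendle's resolution of Lawson's conjecture, so the role of the proof is essentially to verify that our hypotheses place us in the setting where Brendle's theorem applies, and then to translate back.

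First I would apply Theorem~\ref{closed} to the immersion $\phi:M^2\hookrightarrow\R^4$. Since $M$ is embedded, it is in particular closed (compact without boundary) and codimension-two in $\R^4$; the theorem then forces the velocity constant to be $a=1/2$ and exhibits $\phi$ as a minimal immersion $\phi:M^2\hookrightarrow \s^3(R)$ into some round $3$-sphere centered at the origin, of radius $R=|\phi|$. Thus what started as a $2$-soliton in $\R^4$ is realized as a minimal \emph{hypersurface} inside $\s^3(R)$.

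Next, the embeddedness of $\phi$ in $\R^4$ descends immediately to embeddedness in $\s^3(R)$, because $\s^3(R)\hookrightarrow\R^4$ is an embedding; hence $M$ is an embedded minimal torus in $\s^3(R)$. Rescaling by the homothety $x\mapsto x/R$ (which, by the dilation invariance of \eqref{IMCFsol} noted in the introduction, preserves the soliton condition) brings us to the case of an embedded minimal torus in the unit $3$-sphere $\s^3$. Brendle's theorem \cite{Br13}, which resolves Lawson's conjecture, now asserts that the only such surface is, up to isometries of $\s^3$, the Clifford torus $\s^1(1/\sqrt2)\times\s^1(1/\sqrt2)$. Undoing the rescaling recovers the Clifford immersion $\s^1(\sqrt R/\sqrt 2)\times\s^1(\sqrt R/\sqrt 2)\hookrightarrow\R^4$, which is (modulo dilations and ambient isometries) the Clifford torus of Example~\ref{Cliford} with $n_1=n_2=1$.

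There is no genuine analytic obstacle in this argument once Theorem~\ref{closed} has been invoked: the entire depth is hidden in Brendle's theorem, and the bookkeeping consists only in verifying (i) that the closed-soliton hypothesis places $M$ inside a round $3$-sphere as a minimal hypersurface, (ii) that embedding in $\R^4$ implies embedding in that sphere, and (iii) that the dilation invariance of \eqref{IMCFsol} lets us normalize the radius to $1$ before quoting \cite{Br13}. The only mild point worth being careful about is that Theorem~\ref{closed} a priori produces a sphere whose radius $R$ is not specified; this is why the dilation-invariance remark is essential for the appeal to Brendle, and why the conclusion is stated ``up to dilations'' in the spirit of the discussion following the definition of the soliton equation.
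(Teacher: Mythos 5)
Your argument is exactly the one the paper intends: Theorem~\ref{closed} realizes the closed soliton as a minimal surface in some origin-centered $\s^3(R)$, embeddedness in $\R^4$ passes to embeddedness in that sphere, and after normalizing the radius by the dilation invariance of \eqref{IMCFsol} Brendle's theorem \cite{Br13} identifies the torus as the Clifford torus. The only quibble is your phrase ``since $M$ is embedded, it is in particular closed''---compactness comes from $M$ being a torus, not from embeddedness---but this does not affect the proof.
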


The third direct application comes from the rigidity results in Corollary~3 and Theorem~3 of \cite{La69} with assumptions on the Gauss curvature of the surface.

\begin{corollary}\label{CliffordLawson}
Let $M^2$ be a closed homothetic soliton for the inverse mean curvature flow in $\R^4$. If the Gauss curvature $K$ of $M$ is constant or non negative then $M$ is, up to dilations, the sphere $\s^2$ or the Clifford torus $\s^1 \times \s^1$.
\end{corollary}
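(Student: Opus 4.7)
The plan is to combine Theorem~\ref{closed} with the two classical rigidity theorems of Lawson for minimal surfaces in $\s^3$ quoted in \cite{La69}. Since the hypotheses of the corollary (closedness, and constancy or non-negativity of $K$) are invariant under dilations, and since equation~(\ref{IMCFsol}) itself is invariant under dilations, the strategy reduces to a citation argument once the soliton structure has been used to force $M$ into a round $3$-sphere.

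More precisely, I would first apply Theorem~\ref{closed} with $n=2$ and $m=4$: because $M^2$ is closed, it is an expander ($a=1/2$) and the immersion $\phi : M^2 \hookrightarrow \R^4$ factors through a round hypersphere, yielding a minimal immersion $\phi : M^2 \rightarrow \s^3(R)$ for some $R>0$. Rescaling by $1/R$ (which preserves both the soliton condition and the hypotheses on $K$, since $K$ only picks up a factor $R^2$ under a homothety of ratio $1/R$), I may assume without loss of generality that $\phi : M^2 \rightarrow \s^3$ is a closed minimal surface.

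At this point the problem is exactly the one addressed by Lawson. If $K$ is constant on $M$, Corollary~3 of \cite{La69} identifies $M$ with either a totally geodesic $\s^2 \subset \s^3$ or the Clifford torus $\s^1(1/\sqrt{2})\times \s^1(1/\sqrt{2})\subset \s^3$. If instead $K\geq 0$, Theorem~3 of \cite{La69} gives the same conclusion. Undoing the dilation produces, in the original $\R^4$, either a round sphere $\s^2$ (which is a minimal hypersurface of a larger $\s^3$) or a Clifford torus $\s^1 \times \s^1$ as in Example~\ref{Cliford} (with $n=2$), both up to the dilation freedom allowed by the statement.

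The proof is essentially an assembly argument, so the only real "obstacle" is making sure that the rigidity statements of \cite{La69} are applied with the correct normalization; all the geometric content is already packaged in Theorem~\ref{closed} and in Lawson's theorems. No delicate estimate, no maximum principle argument, and no further computation on $M$ itself should be required.
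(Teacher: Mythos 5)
Your proposal is correct and follows exactly the route the paper intends: the corollary is presented there as a ``direct application'' of Theorem~\ref{closed} combined with Corollary~3 and Theorem~3 of \cite{La69}, which is precisely your assembly argument (reduce to a closed minimal surface in a round $\s^3$, then invoke Lawson's rigidity under constant or non-negative $K$). Your extra care about the dilation-invariance of the hypotheses and the normalization of the sphere is sound and matches the paper's general remarks on homothety invariance.
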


But we can get another characterization of the sphere, the Clifford torus and the Veronese immersion (in a not so
trivial way) with hypothesis involving a pinching condition on the norms of the second fundamental form and the mean
curvature vector, condition that is invariant by dilations as just happens to the definition of homothetic soliton for
the IMCF.

\begin{corollary}\label{Th IMRN14}
Let $\phi : M^n \rightarrow \R^{n+p}$ ($n\geq 2$, $p\geq 1$) be a closed homothetic soliton for the inverse mean
curvature flow. If
\begin{equation}\label{boundA}
\frac{|\sigma|^2}{|H|^2}\leq \frac{3p-4}{n(2p-3)}
\end{equation}
then:
\begin{enumerate}
\item either $|\sigma|^2/|H|^2= 1/n$ and $M$ is, up to dilations, the hypersphere $\,\s^n$ (centered at the origin) in
$\R^{n+1}$ (i.e. $p=1$),
\item or $|\sigma|^2 / |H|^2= \frac{3p-4}{n(2p-3)}$ and $M$ is, up to dilations,
\begin{enumerate}
\item either the Clifford immersion of $\,\s^{n_1}(\sqrt{n_1})\times\s^{n_2}(\sqrt{n_2})$, $n_1+n_2=n$,  in $\R^{n+2}$ (with $|\sigma|^2 / |H|^2= 2/n$, i.e.\ $p=2$),
\item or the Veronese immersion  of
$\,\s^2$ in $\R^5$  (with $|\sigma|^2 / |H|^2= 5/6$, i.e.\ $n=2$, $p=3$).
\end{enumerate}
\end{enumerate}
\end{corollary}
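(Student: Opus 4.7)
The plan is to reduce the pinching hypothesis to the classical Chern--do Carmo--Kobayashi (Simons-type) rigidity theorem for closed minimal submanifolds of a round sphere. By Theorem~\ref{closed}, since $M$ is closed we automatically have $a=1/n$ and $\phi$ factors through a minimal immersion into some round sphere $\s^{m-1}(R)\subset\R^{m}$, with $m=n+p$. Let $\hat\sigma$ denote the second fundamental form of $\phi$ as a minimal submanifold of $\s^{m-1}(R)$, and set $q=p-1$, the codimension of $M$ inside this sphere.

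The first step is to translate the hypothesis. Since $\sigma(X,Y)=\hat\sigma(X,Y)-R^{-2}\langle X,Y\rangle\phi$ and $\hat\sigma$ is orthogonal to $\phi$, a direct computation yields
\[
|\sigma|^{2}=|\hat\sigma|^{2}+\frac{n}{R^{2}},\qquad |H|^{2}=\frac{n^{2}}{R^{2}},
\]
so that $|\sigma|^{2}/|H|^{2}=1/n+R^{2}|\hat\sigma|^{2}/n^{2}$. Substituting this into (\ref{boundA}) and simplifying gives, after a short algebraic manipulation,
\[
R^{2}|\hat\sigma|^{2}\ \leq\ \frac{n(p-1)}{2p-3}\ =\ \frac{nq}{2q-1}\ =\ \frac{n}{2-1/q},
\]
which is exactly the sharp pinching bound of Chern, do Carmo and Kobayashi for a closed minimal submanifold $M^{n}\subset\s^{n+q}(R)$.

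I would then invoke that classical result. It leaves exactly three possibilities. If $|\hat\sigma|^{2}\equiv 0$, then $M$ is totally geodesic in $\s^{m-1}(R)$ and, modulo dilation and ambient isometry, is the round hypersphere $\s^{n}\subset\R^{n+1}$; this produces case~(1), with $|\sigma|^{2}/|H|^{2}=1/n$. If the sharp equality is attained with $q=1$, then $M$ is a Clifford product of two spheres (case~(2a), $p=2$); if it is attained with $n=2$ and $q=2$, then $M$ is the Veronese surface in $\s^{4}$ (case~(2b), $p=3$); no other equality case exists. A brief rescaling check against Example~\ref{Cliford} and the known data for the Veronese immersion recovers the stated values $|\sigma|^{2}/|H|^{2}=2/n$ and $5/6$.

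The main step (and really the only nontrivial one) is the algebraic identification of (\ref{boundA}) with the Simons-type bound $R^{2}|\hat\sigma|^{2}\leq n/(2-1/q)$; once that is in place, the rigidity statements (1), (2a), (2b) are immediate from the classical theorem. One minor edge case deserves separate handling: if $p=1$, then $q=0$ and the above reduction degenerates, but in this case (\ref{boundA}) reads $|\sigma|^{2}\leq|H|^{2}/n$, which by Cauchy--Schwarz forces $\phi$ to be totally umbilical, hence a round hypersphere by compactness (alternatively, apply Corollary~\ref{spheres} directly).
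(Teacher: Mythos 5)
Your proposal is correct and follows essentially the same route as the paper: apply Theorem~\ref{closed} to reduce to a closed minimal submanifold of a sphere, use $|\sigma|^2=|\hat\sigma|^2+n/R^2$ and $|H|^2=n^2/R^2$ to convert the pinching (\ref{boundA}) into the sharp Simons--Lawson--Chern--do Carmo--Kobayashi bound $R^2|\hat\sigma|^2\leq n/(2-1/q)$ with $q=p-1$, and read off the rigidity cases. The paper only sketches this translation, so your explicit algebra (and the separate treatment of $p=1$) simply fills in the details of the same argument.
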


In a similar way to Theorem A in \cite{CL14}, the proof of Corollary~\ref{Th IMRN14} consists in translating the well-known results of Simon \cite{Si68}, Lawson
\cite{La69} and Chern-Do Carmo-Kobayashi \cite{CdCK78} about intrinsic rigidity for minimal submanifolds in the unit sphere. Concretely, one must simply take into account that if $\phi : M^n \rightarrow
\s^{m-1}(R)\subset \R^m$ is a spherical minimal immersion with second fundamental form $\hat \sigma $, then
$H=-(n/R^2)\phi$ (and so $|H||\phi|=n$) and, in addition, it satisfies $ |\sigma|^2=|\hat\sigma|^2 +n / R^2 $.

If the dimension and the codimension of the submanifold $M$ coincide (what happens, for example, when $M$ is
Lagrangian), only the case {\it (2)-(a)} with $n=2$ in Corollary~\ref{Th IMRN14} is possible; this immediately implies the following surprising characterization of the Clifford
torus $\s^1 \times \s^1 $.

\begin{corollary}\label{Cor IMRN14}
Let $\phi : M^n \rightarrow \R^{2n}$ be a closed homothetic soliton for the inverse mean curvature flow with
codimension $n\geq 2$. If
\begin{equation}\label{boundAn}
\frac{|\sigma|^2}{|H|^2}\leq \frac{3n-4}{n(2n-3)}
\end{equation}
then $n=2$, $|\sigma|^2 = |H|^2$ and $M$ is, up to dilations, the Clifford torus $\, \s^1 \times \s^1 $ in $\,
\R^4$.
\end{corollary}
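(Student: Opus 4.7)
The plan is to apply Corollary \ref{Th IMRN14} directly with $p = n$. Under this specialization the pinching bound $(3p-4)/(n(2p-3))$ becomes exactly $(3n-4)/(n(2n-3))$, so the hypothesis (\ref{boundAn}) reduces to hypothesis (\ref{boundA}) verbatim. The entire argument then amounts to determining which of the conclusions of Corollary \ref{Th IMRN14} is compatible with the constraint ``codimension equals dimension''.

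I would run through the three cases. Case (1) forces $p = 1$, which is incompatible with $n \geq 2$ since here $p = n \geq 2$. Case (2)(b), the Veronese immersion of $\s^2$ in $\R^5$, is realized only for $(n, p) = (2, 3)$, so $p \neq n$ and it is ruled out as well. That leaves case (2)(a), the Clifford immersion of $\s^{n_1}(\sqrt{n_1}) \times \s^{n_2}(\sqrt{n_2})$, which necessarily sits in codimension $p = 2$. Imposing $p = n$ forces $n = 2$, and the constraint $n_1 + n_2 = 2$ with $n_1, n_2 \geq 1$ leaves only $n_1 = n_2 = 1$, yielding the Clifford torus $\s^1 \times \s^1 \subset \R^4$.

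To complete the statement it remains to verify the numerical value of the ratio: substituting $n = 2$ into $(3n-4)/(n(2n-3))$ gives $2/2 = 1$, which matches $|\sigma|^2/|H|^2 = 1$ in Example~\ref{Cliford} for $n_1 = n_2 = 1$, so the pinching is realized as an equality and $|\sigma|^2 = |H|^2$. The main obstacle has already been absorbed into Corollary \ref{Th IMRN14} (itself deduced from the classical rigidity theorems of Simons, Lawson and Chern--do Carmo--Kobayashi via Theorem \ref{closed}); beyond that, the only ``work'' here is the bookkeeping of the three cases above.
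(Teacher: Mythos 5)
Your proposal is correct and follows exactly the route the paper takes: the paper derives Corollary~\ref{Cor IMRN14} by specializing Corollary~\ref{Th IMRN14} to $p=n$ and observing that only case (2)(a) with $n=2$ survives, which is precisely your case analysis. The numerical check that the bound equals $1$ at $n=2$ and matches Example~\ref{Cliford} is also consistent with the paper.
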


We emphasize that the Example~\ref{Cliford}, Example~\ref{ProductCircles} and Example~\ref{Anciaux0} coincide when $n=2$ providing precisely the Clifford torus $\s^1 \times \s^1$ in $\R^4$. In fact, we remark that
Example~\ref{ProductCircles} and Example~\ref{Anciaux0} satisfy the hypothesis (\ref{boundAn}) of Corollary~\ref{Cor IMRN14} only when $n=2$.
\vspace{0.1cm}

In the Lagrangian setting we show that there are many examples of closed expanders for the IMCF thanks to the following result.

\begin{theorem}\label{ClosedLagr}
Let $\phi : M^n \rightarrow \R^{2n}\equiv \C^n$ be a closed Lagrangian homothetic soliton for the inverse mean
curvature flow. Then, up to dilations, $M=\s^1 \times N^{n-1}$, where $N$ is a closed $(n\!-\!1)$-manifold, and $\phi$ is given by
\[
\phi(e^{it},x)=e^{it} \psi (x)
\]
where $\psi:N^{n-1}\rightarrow\s^{2n-1}\subset \C^n$ is a minimal Legendrian immersion.
\end{theorem}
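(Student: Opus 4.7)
The plan is to use Theorem~\ref{closed} to reduce to a spherical minimal immersion and then exploit the Lagrangian structure to extract a Hopf-like $\s^1$-action on $M$ whose quotient furnishes $\psi$.

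First, Theorem~\ref{closed} forces $a=1/n$ and realizes $\phi:M\to\s^{2n-1}(R)\subset\C^n$ as a minimal isometric immersion with $H=-(n/R^2)\phi$; by homothety invariance I rescale to $R=1$, so $H=-n\phi$. Constancy of $|\phi|$ yields $\phi^\top=0$; since $\phi$ is Lagrangian, $J$ interchanges the tangent and normal bundles, so $J\phi$ is tangent to $M$. Let $V\in TM$ be the unique tangent field with $d\phi(V)=J\phi$, so $|V|=1$. Combining the Lagrangian-angle formula~(\ref{Hlagr}) with $H=-n\phi$ and applying $J$ gives the crucial identity
\[
\nabla\theta \;=\; n\,V,
\]
so $\theta$ has no critical points and $V$ has constant unit norm.

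Next I build a free circle action. The flow $\varphi_t$ of $V$ on $M$ satisfies $\phi\circ\varphi_t=e^{it}\phi$, because both sides solve $\gamma'=i\gamma$ in $\C^n$ with initial value $\phi$. Since $V$ is nowhere zero and the Hopf flow $e^{it}$ has minimal period $2\pi$ on $\phi(M)\subset\s^{2n-1}\setminus\{0\}$, this defines a free $\s^1$-action on $M$ with a closed $(n-1)$-dimensional quotient $N:=M/\s^1$. The dual 1-form $\alpha:=g(V,\cdot)$ satisfies $n\alpha=d\theta$ and is therefore closed, so the principal bundle $M\to N$ is flat; a Legendrian section then gives an identification $M\cong\s^1\times N$ under which $\phi(e^{it},x)=e^{it}\psi(x)$ with $\psi:=\phi|_N$.

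It remains to verify the Legendrian and minimality properties of $\psi$. By construction $T_xN=V^\perp$ and $\phi$ is isometric, so $d\psi(T_xN)\perp d\phi(V)=J\psi(x)$; this is precisely the contact condition, so $\psi$ is Legendrian in $\s^{2n-1}$. For minimality of $\psi$, I expand the second fundamental form of the product immersion $(e^{it},x)\mapsto e^{it}\psi(x)$ in the adapted frame $\{ie^{it}\psi,\,e^{it}d\psi(X_j)\}$: the $\s^{2n-1}$-mean curvature of $\phi$ reduces to $e^{it}H_\psi^{\s^{2n-1}}$ (the Hopf contribution drops out by Legendrianity), so the minimality of $\phi$ in $\s^{2n-1}$ already established in the reduction step forces $H_\psi=0$.

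The delicate point is the global product decomposition: although $\alpha$ is closed, the flat bundle $M\to N$ can have nontrivial monodromy (for the Clifford torus in $\C^2$ the holonomy is of order $n$), so a globally Legendrian section of $M$ itself need not exist on the nose. The statement of the theorem is then best understood as asserting that $\phi$ factors through an immersion of the claimed form from $\s^1\times N$, possibly after replacing $M$ by a finite cover.
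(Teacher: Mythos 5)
Your argument is essentially correct but follows a genuinely different route from the paper. The paper's proof is very short: it invokes Theorem~\ref{closed} to place $M$ in a sphere, then cites Chen's classification of spherical Lagrangian submanifolds of $\C^n$ \cite{Ch13} to obtain the decomposition $M=\s^1\times N$ with $\phi(e^{it},x)=e^{it}\psi(x)$ and $\psi$ Legendrian, and finally reads off the minimality of $\psi$ from the Ros--Urbano formula for $H$ in \cite{RU98}. You instead rederive the structural part from scratch: the identity $\nabla\theta=nV$ (with $d\phi(V)=J\phi$) obtained by combining $H=-n\phi$ with (\ref{Hlagr}) is correct, as is the key observation that the flow of $V$ satisfies $\phi\circ\varphi_t=e^{it}\phi$, and your identification of $V^\perp$ with the contact distribution and of the minimality of $\psi$ with that of $\phi$ in $\s^{2n-1}$ is the content of the cited formula in \cite{RU98}. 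What your route buys is self-containedness and a transparent geometric mechanism (the Hopf flow realized intrinsically on $M$); what the paper's route buys is brevity and the fact that the global topological bookkeeping is delegated to \cite{Ch13}.

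Two points deserve attention. First, your assertion that the flow of $V$ is $2\pi$-periodic and free is not justified as written: from $\phi\circ\varphi_{2\pi}=\phi$ you only get that $\varphi_{2\pi}$ preserves each finite fibre $\phi^{-1}(\phi(p))$, so each orbit is closed with period some integer multiple of $2\pi$; one still has to argue the period is locally (hence globally) constant, and a period $2\pi k$ with $k>1$ would again only give the conclusion up to a finite cover. Second, the monodromy issue you raise at the end is not a defect of your proof but a genuine subtlety of the statement itself: for the embedded Clifford torus in $\C^2$ the horizontal (Legendrian) leaves meet each Hopf fibre twice, so no global Legendrian section exists and the embedding is only doubly covered by a map of the form $(e^{it},x)\mapsto e^{it}\psi(x)$ with $\psi$ a horizontal great circle. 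The paper's proof does not address this; it is absorbed into the citation of \cite{Ch13} and into the convention, visible in Remark~\ref{LegenLagr}, that these parametrizations are finite Riemannian coverings. So your closing caveat is the honest reading of the theorem; it would strengthen your write-up to state explicitly that what you prove is that $\phi$ is, up to dilation, the composition of a finite covering $\s^1\times N\to M$ with an immersion of the displayed form, and to supply the short argument for the periodicity of the flow.
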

\begin{proof}
Using that $M$ is closed and the description in \cite{Ch13} of the spherical Lagrangian
submanifolds in complex Euclidean space, we deduce that $M=\s^1 \times N^{n-1}$, where $N$ is also closed, and  $\phi$ is given, up to dilations, by $\phi(e^{it},x)=e^{it} \psi (x)$, where $\psi:N^{n-1}\rightarrow\s^{2n-1}\subset \C^n$ is a Legendrian immersion. It is easy to check that $\phi^\perp= \phi =-J\phi_t $. 
Using the formula for $H$ given in part 4 of Proposition~3 in \cite{RU98}, we deduce that $\psi$ is necessarily minimal in order to $H$ be collinear to $\phi^\perp $ and, in addition, $H=n J\phi_t$ and so $\phi$ satisfies (\ref{IMCFsol}).
\end{proof}
\begin{remark}\label{LegenLagr}
{\rm There are many examples of compact minimal Legendrian submanifolds in odd-dimensional spheres (see \cite{Na81}, Section 2 in \cite{BG04} or Section 2 in \cite{CU04} and Section 4 in \cite{CLU06}). Moreover, 
minimal Legendrian submanifolds in $\s^{2n-1}$ are in
correspondence to minimal Lagrangian submanifolds in the complex projective space $\C P^{n-1}$ projecting via the Hopf fibration $\Pi:\s^{2n-1} \rightarrow \C P^{n-1}$. Hence in Theorem~\ref{ClosedLagr} we arrive at a family of closed Lagrangian submanifolds in $\C^n $ which can be considered Hopf immersions over closed minimal Lagrangian submanifolds in $\C P ^{n-1}$ (see the Hopf tori studied in \cite{Pi85}).

The Hopf immersion described in the Example~\ref{Anciaux0} corresponds (up to a homothety) to the election of the totally geodesic Legendrian immersion $\psi:\s^{n-1}\rightarrow\s^{2n-1}\subset \C^n$, $\psi (x)=x$, 
which produces the totally geodesic Lagrangian $\s^{n-1}$ in $\C P^{n-1}$. 

The Example~\ref{ProductCircles} is produced (up to a homothety of ratio $\sqrt n$) by considering the minimal Legendrian immersion given by
\begin{eqnarray} 
\R^{n-1} & \rightarrow & \s^{2n-1}\subset \C^n \nonumber \\
(x_1,\dots,x_{n-1}) & \mapsto  &
\frac{1}{\sqrt{n}} \left(e^{ix_1},\dots,e^{ix_{n-1}}, e^{-i(x_1+ \dots +x_{n-1})} \right)  \nonumber
\end{eqnarray}
which provides the torus $T^{n-1}$ described in Section III.3.A of \cite{HL82}. In this way we get
a finite Riemannian covering of the so-called generalized Clifford torus in $\C P^{n-1}$ (see for instance \cite{LZ94}).
}
\end{remark}

In the two-dimensional case, since a minimal Legendrian curve in $\s^3$ must be totally geodesic, as an immediate consequence of Theorem~\ref{ClosedLagr} we get a new rigidity result for the Clifford torus.

\begin{corollary}\label{ClosedLagrDim2}
Let $\phi : M^2 \rightarrow \R^{4}$ be a closed homothetic soliton for the inverse mean curvature flow. If $\phi$ is
Lagrangian then $M^2$ is, up to dilations, the Clifford torus $\s^1 \times \s^1$.
\end{corollary}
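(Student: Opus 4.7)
The plan is to specialize Theorem~\ref{ClosedLagr} to $n=2$ and identify the resulting surface explicitly. By that theorem, up to dilations $M^2 = \s^1 \times N^1$ with $N^1$ closed and $\phi(e^{it}, x) = e^{it}\psi(x)$ for some minimal Legendrian immersion $\psi: N^1 \to \s^3 \subset \C^2$. Because $N^1$ is one-dimensional, minimality of $\psi$ in $\s^3$ is equivalent to $\psi$ being a geodesic, hence $\psi$ traces a great circle of $\s^3$; in particular $N^1 \cong \s^1$. It then remains only to determine which great circles of $\s^3$ are Legendrian and what product $e^{it}\psi(s)$ they produce.

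For the first part I would parameterize by arc length, writing $\psi(s) = \cos(s)\,v + \sin(s)\,w$ with $\{v,w\}$ orthonormal in $\R^4 \equiv \C^2$, and expand the Legendrian condition $\langle \psi'(s), J\psi(s)\rangle \equiv 0$. Using antisymmetry of $\omega$, the expansion collapses to the single scalar requirement $\langle w, Jv\rangle = 0$, which forces $\{v, Jv, w, Jw\}$ to be a unitary orthonormal basis of $\C^2$. Applying the resulting unitary transformation of $\C^2$ I may then assume $v = (1,0)$, $w = (0,1)$, so $\psi(s) = (\cos s, \sin s)$ and $\phi(e^{it}, s) = e^{it}(\cos s, \sin s)$.

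The final and main obstacle is recognizing this twisted image as the Clifford immersion of Example~\ref{Cliford} with $n_1 = n_2 = 1$, since the formula $e^{it}(\cos s, \sin s)$ does not visually look like $\s^1 \times \s^1$. The cleanest argument is conceptual: since $\psi$ is a Legendrian (i.e.\ horizontal) geodesic in $\s^3$, its image under the Hopf fibration $\Pi: \s^3 \to \C P^1 \cong \s^2$ is a geodesic of $\s^2$ (by the O'Neill formula), and $\phi$ parameterizes the full Hopf preimage $\Pi^{-1}(\Pi\circ\psi)$; any Hopf torus over a great circle of $\s^2$ is, up to an element of $U(2)$, the Clifford torus. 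Alternatively, the substitution $\alpha = t+s$, $\beta = t-s$ together with a short explicit unitary change of coordinates on $\C^2$ converts $\phi$ into $\tfrac{1}{\sqrt 2}(e^{i\alpha}, e^{i\beta})$, the standard Clifford torus in $\s^3(1)$, which after dilation by $\sqrt 2$ matches Example~\ref{Cliford}. Either route yields the claimed rigidity.
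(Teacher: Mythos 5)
Your proposal is correct and follows the paper's own route: the paper also derives this as an immediate consequence of Theorem~\ref{ClosedLagr}, using that a minimal Legendrian curve in $\s^3$ must be totally geodesic, i.e.\ a horizontal great circle whose Hopf preimage is the Clifford torus. You simply make explicit the computations (the Legendrian condition $\langle w,Jv\rangle=0$ and the unitary change of coordinates turning $e^{it}(\cos s,\sin s)$ into $\tfrac{1}{\sqrt2}(e^{i(t+s)},e^{i(t-s)})$) that the paper leaves to the reader.
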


We involve the sectional curvature in the next result in order to characterize the Example~\ref{Anciaux0} and the Example~\ref{ProductCircles} when $n=3$.

\begin{corollary}\label{ClosedLagrKDim3}
Let $\phi : M^3 \rightarrow \C^{3}$ be a closed orientable Lagrangian homothetic soliton for the inverse mean curvature flow. 
If the sectional curvature $K$ of $M^3$ verifies that $K\geq 0$ or $K\leq 0$ then
$M^3$ is, up to dilations, the Hopf immersion of \,$\s^1 \times \s^2$ in $\C^3$ described in the Example~\ref{Anciaux0} or the immersion of \,$\s^1 \times \s^1 \times \s^1$ in $\R^6$ described in the Example~\ref{ProductCircles}. 
\end{corollary}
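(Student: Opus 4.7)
The plan is to apply Theorem~\ref{ClosedLagr} to reduce the problem to one about closed minimal Legendrian surfaces in $\s^5$, and then invoke a classical rigidity result for minimal Lagrangian surfaces in $\C P^2$ with sectional curvature of definite sign.

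First I would use Theorem~\ref{ClosedLagr} to write, up to dilations, $M^3=\s^1\times N^2$ with $\phi(e^{it},x)=e^{it}\psi(x)$, where $\psi:N^2\to\s^5\subset\C^3$ is a closed minimal Legendrian immersion. A direct computation shows that $\phi_t=i\,e^{it}\psi$ has unit length and $\phi_*X=e^{it}\psi_*X$ for any $X$ tangent to $N^2$, while the Legendrian condition $\langle\psi_*X,J\psi\rangle=0$ forces $\langle\phi_t,\phi_*X\rangle=0$. Hence the induced metric on $M^3$ is the Riemannian product of the standard $\s^1$ and the metric on $N^2$ induced by $\psi$.

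On a Riemannian product, any sectional plane containing the $\s^1$ factor has zero curvature, so the sign hypothesis on the sectional curvature of $M^3$ is equivalent to the same sign hypothesis on the Gauss curvature $K_N$ of $N^2$. Passing to $\C P^2$ via the Hopf fibration (Remark~\ref{LegenLagr}), $\bar\psi=\Pi\circ\psi$ is a closed minimal Lagrangian surface in $\C P^2$, locally isometric to $\psi$. The Gauss equation in $\C P^2$ (normalized to holomorphic sectional curvature $4$) together with minimality yields the pointwise identity $|\sigma|^2=2(1-K_N)$, where $\sigma$ is the second fundamental form of $\bar\psi$. Thus $K_N\geq 0$ (resp.\ $K_N\leq 0$) is equivalent to the pointwise pinching $|\sigma|^2\leq 2$ (resp.\ $|\sigma|^2\geq 2$).

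Finally I would invoke the classical rigidity result for closed minimal Lagrangian surfaces in $\C P^2$ with sectional curvature of definite sign: such a surface must have constant Gauss curvature equal to $0$ or $1$, i.e.\ it is either the totally geodesic $\R P^2$ or the Clifford torus. Lifting back via the Hopf fibration and applying Remark~\ref{LegenLagr}, the totally geodesic $\R P^2$ corresponds to the totally geodesic Legendrian $\s^2\hookrightarrow\s^5$ and produces the Hopf immersion of Example~\ref{Anciaux0}, while the Clifford torus corresponds to the flat minimal Legendrian torus $T^2\hookrightarrow\s^5$ and produces the product of three circles of Example~\ref{ProductCircles}.

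The main obstacle is the classification step. For $K_N\geq 0$ it follows from a Simons-type computation for $\triangle|\sigma|^2$ on a minimal Lagrangian surface in $\C P^2$ combined with $|\sigma|^2=2(1-K_N)$ and the maximum principle on the closed $N^2$, forcing $|\sigma|^2$ to be identically $0$ or $2$. The $K_N\leq 0$ case is reduced by the same integral identity together with Gauss--Bonnet ($\chi(N^2)\leq 0$), which again forces $K_N\equiv 0$ and thus the Clifford torus.
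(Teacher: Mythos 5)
Your proposal is correct and follows essentially the same route as the paper: reduce via Theorem~\ref{ClosedLagr} to a closed minimal Legendrian surface in $\s^{5}$, note that the product metric transfers the sign condition on $K$ to the Gauss curvature of $N^2$, project by the Hopf fibration, and invoke the rigidity of closed minimal Lagrangian surfaces in $\C P^2$ with curvature of definite sign. The only difference is that the paper cites Theorem~7 of \cite{Ya74} (handling the genus-zero case separately), whereas you sketch a proof of that same rigidity statement via the Simons-type identity and the holomorphic cubic differential.
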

\begin{proof}
Using Theorem \ref{ClosedLagr} we know that, up to dilations, $M=\s^1 \times N^{2}$, where $(N^{2},g)$ is a closed orientable manifold, and $\phi$ is given by
$
\phi(e^{it},x)=e^{it} \psi (x),
$
where $\psi:N^{2}\rightarrow\s^{5}\subset \C^3$ is a minimal Legendrian immersion. The induced metric $\phi^* \langle \cdot, \cdot \rangle$  is simply the product metric  $ dt^2 \times g$. 
In this case $\psi $ produces a minimal Lagrangian closed orientable surface in $\C P^2$ via the Hopf fibration with the same induced metric.  We now make use of some results in \cite{Ya74} on this class of minimal Lagrangian surfaces in $\C P^2$. If the genus of $N$ is zero, we can conclude that $\psi$ is totally geodesic and we obtain the Example~\ref{Anciaux0} for $n=3$. When the genus of $N$ is non null, since $K\geq 0$ or $K\leq 0$  necessarily $K_N\geq 0$ or $K_N\leq 0$ respectively. Then Theorem 7 of \cite{Ya74} implies that $K_N=0$ and 
$N$ must be the generalized Clifford torus in $\C P^2$. In this case we obtain the Example~\ref{ProductCircles} for $n=3$ (see Remark \ref{LegenLagr}).
\end{proof}

If we assume that the sectional curvature of a closed Lagrangian homothetic soliton for the IMCF is constant, we obtain the following uniqueness result for the product of $n$ circles in $\R^{2n}$.

\begin{corollary}\label{ClosedLagrKcte}
Let $\phi : M^n \rightarrow \C^{n}$ be a closed Lagrangian homothetic soliton for the inverse mean curvature flow. If the sectional curvature $K$ of $M^n$ is constant then $M^n$ is, up to dilations, the immersion of \,$\s^1 \times \stackrel{n)}{\dots} \times \s^1$ in $\R^{2n}$ described in the Example~\ref{ProductCircles}. 
\end{corollary}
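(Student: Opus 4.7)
First, by Theorem~\ref{ClosedLagr}, up to dilations we may write $M=\s^1\times N^{n-1}$ with the Riemannian product metric $dt^2\oplus g_N$, and $\phi(e^{it},x)=e^{it}\psi(x)$, where $\psi:N^{n-1}\to\s^{2n-1}\subset\C^n$ is a closed minimal Legendrian immersion. Because the metric is a product, every tangent $2$-plane containing the $\s^1$-direction $\partial_t$ is flat; the constancy of the sectional curvature $K$ therefore forces $K\equiv 0$, so that $(N,g_N)$ is a closed flat Riemannian manifold.

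Next, since $\psi$ is horizontal with respect to the Hopf fibration $\Pi:\s^{2n-1}\to\C P^{n-1}$ (the Legendrian condition), the composition $\Pi\circ\psi$ is a local isometry onto a closed flat minimal Lagrangian submanifold of $\C P^{n-1}$, as recalled in Remark~\ref{LegenLagr}.

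The crux of the argument, and its main obstacle, is to invoke the classification of closed flat minimal Lagrangian submanifolds of $\C P^{n-1}$: any such submanifold is congruent, modulo the holomorphic isometry group of $\C P^{n-1}$, to the generalized Clifford torus described in Remark~\ref{LegenLagr} (see \cite{LZ94}). For $n=3$ this reduces to Theorem~7 of \cite{Ya74} used in the proof of Corollary~\ref{ClosedLagrKDim3}; for general $n$ it follows from a Simons-type rigidity theorem which, under the flat minimal Lagrangian hypothesis, forces the second fundamental form to be parallel and places the submanifold within Naitoh's classification \cite{Na81} of parallel minimal Legendrian submanifolds in odd-dimensional spheres. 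Verifying (or quoting) this step carefully in arbitrary codimension is the delicate point, since one must rule out a priori the possibility of other closed flat minimal Lagrangian examples.

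Finally, lifting the generalized Clifford torus back through $\Pi$ recovers, up to congruence, the explicit minimal Legendrian immersion $\R^{n-1}\to\s^{2n-1}$ written in Remark~\ref{LegenLagr}; substituting this $\psi$ into $\phi(e^{it},x)=e^{it}\psi(x)$ yields, up to a homothety of ratio $\sqrt n$, precisely the immersion of $\s^1\times\stackrel{n)}{\cdots}\times\s^1$ in $\R^{2n}$ of Example~\ref{ProductCircles}, thereby completing the classification.
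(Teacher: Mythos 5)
Your argument follows the paper's proof essentially step for step: reduce via Theorem~\ref{ClosedLagr} to a product $\s^1\times N^{n-1}$ with metric $dt^2\times g$, observe that constancy of $K$ on a product forces $K\equiv 0$ and hence $N$ flat, project by the Hopf fibration to a flat closed minimal Lagrangian in $\C P^{n-1}$, and identify it with the generalized Clifford torus by the main result of \cite{LZ94}. The step you flag as delicate is exactly where the paper simply quotes \cite{LZ94}, so the alternative route you sketch through a Simons-type rigidity argument and Naitoh's classification is not needed.
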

\begin{proof}
Using again Theorem~\ref{ClosedLagr} we have that, up to dilations, $M=\s^1 \times N^{n-1}$, where $(N^{n-1},g)$ is a closed manifold, and $\phi$ is given by
$
\phi(e^{it},x)=e^{it} \psi (x),
$
where $\psi:N^{n-1}\rightarrow\s^{2n-1}\subset \C^n$ is a minimal Legendrian immersion. The induced metric $\phi^* \langle \cdot, \cdot \rangle$  is the product metric  $ dt^2 \times g$. Thus,
if we assume that $K$ is constant, necessarily $M$ must be flat, i.e.\ $K=0$. 
Then the sectional curvature $K_N$ of $N$ is zero too.

Via the Hopf fibration $\Pi$, $\psi$ provides a minimal Lagrangian in $\C P^{n-1}$ with zero sectional curvature. Using the main result of \cite{LZ94} we have that $\Pi\circ \psi$ is the generalized Clifford torus. Therefore, taking into account Remark \ref{LegenLagr}, $\phi $ is the Example~\ref{ProductCircles}.
\end{proof}

The following result follows the same spirit of Corollaries~\ref{Th IMRN14} and \ref{Cor IMRN14}. 

\begin{corollary}\label{ClosedLagrPinching}
Let $\phi : M^n \rightarrow \C^n$ be a closed Lagrangian homothetic soliton for the inverse mean
curvature flow. If 
\begin{equation}\label{pinch1}
\frac{|\sigma|^2}{|H|^2}\leq \frac{11n-6}{3n^2}
\end{equation}
either $|\sigma|^2 /|H|^2=(3n-2)/n^2$ and $\phi $ is given, up to dilations, by the Hopf immersion of \,$\s^1 \times \s^{n-1}$ in $\C^n$ described in Example~\ref{Anciaux0} or $n=3$, $|\sigma|^2 /|H|^2=1$ and $\phi $ is given, up to dilations, by \,$\s^1 \times \s^1 \times \s^1 \hookrightarrow \C^3$.
\end{corollary}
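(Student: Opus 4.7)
The plan is to reduce the pinching~\eqref{pinch1} to a pinching on a closed minimal Lagrangian in $\C P^{n-1}$ and then invoke a Chern--do Carmo--Kobayashi-type gap theorem there.

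First, by Theorem~\ref{ClosedLagr}, up to dilations $M=\s^1\times N^{n-1}$ and $\phi(e^{it},x)=e^{it}\psi(x)$, where $\psi:N^{n-1}\to\s^{2n-1}\subset\C^n$ is a closed minimal Legendrian immersion whose Hopf projection $\bar\psi:N^{n-1}\to\C P^{n-1}$ is a closed minimal Lagrangian (Remark~\ref{LegenLagr}). I would next express $|\sigma|^2$ and $|H|^2$ in terms of $\sigma_{\bar\psi}$. Working in the orthonormal tangent frame $T=J\phi$, $E_i=e^{it}\psi_{x_i}$ on $M$ and in the normal frame $\phi=-JT$, $JE_k$, a direct computation using $D_TT=-\phi$, $D_T E_i=JE_i$ and the decomposition
\[ \psi_{x_ix_j}=\sum_k a_{ij}^k\psi_{x_k}-\delta_{ij}\psi+\sum_k C_{ij}^k J\psi_{x_k} \]
(the $J\psi$-coefficient vanishes by the Legendrian condition and the $C_{ij}^k$ are the components of $\sigma_{\bar\psi}$) yields
\[ \sigma(T,T)=-\phi,\quad \sigma(T,E_i)=JE_i,\quad \sigma(E_i,E_j)=-\delta_{ij}\phi+\sum_k C_{ij}^k JE_k, \]
so that $H=-n\phi$ and
\[ |H|^2=n^2,\qquad |\sigma|^2=3n-2+|\sigma_{\bar\psi}|^2. \]
As sanity checks, taking $\bar\psi$ totally geodesic ($|\sigma_{\bar\psi}|^2=0$) reproduces the value $(3n-2)/n^2$ of Example~\ref{Anciaux0}, and the generalized Clifford torus in $\C P^{n-1}$ (flat, so $|\sigma_{\bar\psi}|^2=(n-1)(n-2)$) reproduces the value $1$ of Example~\ref{ProductCircles}. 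Consequently, \eqref{pinch1} rewrites exactly as
\[ |\sigma_{\bar\psi}|^2\leq\frac{2n}{3}=\frac{2((n-1)+1)}{3}. \]

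The proof then closes by invoking the Chern--do Carmo--Kobayashi-type gap theorem for closed minimal Lagrangian submanifolds in $\C P^m$: under the pinching $|\sigma|^2\leq 2(m+1)/3$ the only possibilities are the totally geodesic $\R P^m$ and, only when $m=2$, the Clifford torus in $\C P^2$ with $|\sigma|^2=2$. Reading these two alternatives back through Remark~\ref{LegenLagr} produces Example~\ref{Anciaux0} and, exclusively for $n=3$, Example~\ref{ProductCircles}, completing the classification; note $(n-1)(n-2)>2n/3$ when $n\geq 4$, so the second alternative genuinely survives only at $n=3$. The principal obstacle is this last step: the direct application of Chern--do Carmo--Kobayashi to $\psi$ viewed as a minimal submanifold of $\s^{2n-1}$ only yields rigidity under the strictly smaller threshold $n(n-1)/(2n-1)$, so the sharp gap at $2(m+1)/3$ must be established by using the Lagrangian structure itself, either by citing it from the literature on minimal Lagrangians in $\C P^m$ or by deriving it from an integrated Simons-type identity tailored to the Lagrangian setting.
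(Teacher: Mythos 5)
Your proposal is correct and follows essentially the same route as the paper: reduce via Theorem~\ref{ClosedLagr} to a closed minimal Lagrangian in $\C P^{n-1}$, use $|H|^2=n^2$ and $|\sigma|^2=3n-2+|\sigma_{\Pi\circ\psi}|^2$ (which the paper cites from part 3 of Proposition~3 in \cite{RU98} rather than recomputing) to turn \eqref{pinch1} into $|\sigma_{\Pi\circ\psi}|^2\leq 2n/3$, and then apply the sharp Lagrangian gap theorem. The "principal obstacle" you flag is resolved exactly as you anticipate, by citation: the paper invokes Theorem~1 of \cite{Xi92} for the pinching $2(m+1)/3$ on minimal totally real submanifolds of $\C P^m$.
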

\begin{proof}
By Theorem~\ref{ClosedLagr} we can consider, up to dilations,  $\phi:\s^1 \times N^{n-1}\rightarrow\C^n$, 
$
\phi(e^{it},x)=e^{it} \psi (x),
$
with $\psi:N^{n-1}\rightarrow\s^{2n-1}\subset \C^n$ a minimal Legendrian immersion. Since $|\phi|=1$ we know that $|H|^2=n^2$ (see Remark~\ref{First examples}).
Via the Hopf fibration $\Pi$, $\psi$ provides a minimal Lagrangian immersion of $N$ in $\C P^{n-1}$ such that $|\sigma_{\Pi\circ\psi}|^2=|\sigma|^2-3n+2$, using the formula given in part 3 of Proposition~3 in \cite{RU98}. Then (\ref{pinch1}) gives that $|\sigma_{\Pi\circ\psi}|^2 \leq 2n/3$.
Now Theorem 1 of \cite{Xi92} implies that $\Pi\circ \psi$ is totally geodesic (and so $|\sigma|^2 =3n-2$) or the generalized Clifford torus of $\C P^2$. Taking into account Remark \ref{LegenLagr}, we conclude the proof.
\end{proof}

\begin{remark}
{\rm 
The proof of Corollary \ref{ClosedLagrPinching} implies that a closed Lagrangian homothetic soliton for the IMCF in $\C^n$ always satisfies that
\begin{equation}\label{pinch0}
\frac{|\sigma|^2}{|H|^2}\geq \frac{3n-2}{n^2}
\end{equation}
and the equality holds only for the Hopf immersion of \,$\s^1 \times \s^{n-1}$ in $\C^n$ described in Example~\ref{Anciaux0}. On the other hand, we easily can verify that the bound for $|\sigma|^2/|H|^2$ given in (\ref{pinch1}) is always greater than the one given in (\ref{boundAn}). Moreover, the bound given in (\ref{pinch0}) and (\ref{boundAn}) only coincide if $n=2$ with the Clifford torus.
}
\end{remark}

We finish this section characterizing the Example~\ref{Anciaux0} by its high degree of symmetry.

\begin{corollary}\label{ClosedLagrSO(n)}
Let $\phi : M^n \rightarrow \C^n$ be a closed Lagrangian homothetic soliton for the inverse mean
curvature flow. If $\phi $ is invariant under the action of the
special orthogonal group given by
\begin{equation}\label{eq:SOn}
\begin{array}{c}
SO(n)\times \C^{n}\longrightarrow \C^n \\
(A,(z_1,\dots,z_n)) \mapsto (z_1,\dots,z_n)A 
\end{array}
\end{equation}
then $\phi $ is given, up to dilations, by the Hopf immersion of \,$\s^1 \times \s^{n-1}$ in $\C^n$ given in Example~\ref{Anciaux0}.
\end{corollary}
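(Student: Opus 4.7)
The natural plan is to descend to $\C P^{n-1}$ via the Hopf fibration and exploit the orbit structure of the induced $SO(n)$-action there. First I would apply Theorem~\ref{ClosedLagr}, writing up to dilation $M=\s^1\times N^{n-1}$ and $\phi(e^{it},x)=e^{it}\psi(x)$ for a minimal Legendrian immersion $\psi:N^{n-1}\to\s^{2n-1}\subset\C^n$. Corollary~\ref{ClosedLagrDim2} already handles $n=2$, so I would assume $n\geq 3$. Because right multiplication by $SO(n)$ commutes with the $\C^*$-scalar action on $\C^n$, the hypothesized $SO(n)$-invariance of $\phi(M)$ descends to $SO(n)$-invariance of the closed minimal Lagrangian immersion $\bar\psi:=\Pi\circ\psi:N\to\C P^{n-1}$ supplied by Remark~\ref{LegenLagr}.

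Next I would analyze the $SO(n)$-orbits on $\C P^{n-1}$. Writing a point as $[z]=[x+iy]$ with $x,y\in\R^n$, the generic orbit (with $x,y$ linearly independent and $\sum z_j^2\neq 0$) has stabilizer $SO(n-2)$ and dimension $2n-3$; the complex quadric $Q=\{[z]:\sum z_j^2=0\}$ is, for $n\geq 3$, a single orbit of dimension $2n-4$; and the locus where $x$ and $y$ are $\R$-parallel is the totally geodesic $\R P^{n-1}\subset\C P^{n-1}$, a single orbit of dimension $n-1$. Since $\bar\psi(N)$ is $SO(n)$-invariant of dimension $n-1<2n-3$, no point of $\bar\psi(N)$ can lie on a generic orbit. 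Moreover, $\bar\psi(N)$ cannot meet $Q$: such a point would force the orbit $Q$ to be contained in $\bar\psi(N)$, which for $n\geq 4$ violates the dimension count, and for $n=3$ contradicts the fact that $Q$ is a complex submanifold of $\C P^{n-1}$ while $\bar\psi$ is Lagrangian, since a tangent space cannot simultaneously be $J$-invariant and totally real. Hence $\bar\psi(N)\subset\R P^{n-1}$, and since $\bar\psi$ is a local diffeomorphism between closed connected manifolds of the same dimension, it is a finite covering.

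Finally I would identify the Legendrian lift. The horizontal lift of $\R P^{n-1}\subset\C P^{n-1}$ along $\Pi:\s^{2n-1}\to\C P^{n-1}$ is the standard totally geodesic Legendrian $\s^{n-1}\hookrightarrow\s^{2n-1}$ through the real points, which does not globalize over $\R P^{n-1}$ itself. As the only connected covers of $\R P^{n-1}$ are $\R P^{n-1}$ and $\s^{n-1}$, and $\s^{n-1}$ is simply connected for $n\geq 3$, I conclude that $N=\s^{n-1}$ and $\psi(x)=x$. Therefore $\phi(e^{it},x)=e^{it}x$ on $\s^1\times\s^{n-1}$, which after the dilation by $\sqrt{n}$ is precisely the Hopf immersion of Example~\ref{Anciaux0}. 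The main obstacle I expect is the treatment of the quadric stratum $Q$: for $n=3$ the orbit dimension on $Q$ matches $\dim\bar\psi(N)$, so ruling it out requires genuinely invoking the incompatibility between the Lagrangian condition and the complex structure of $Q$ rather than a mere dimension count.
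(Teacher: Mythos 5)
Your argument is correct, but it follows a genuinely different route from the one the paper has in mind. The paper states this corollary without a displayed proof; the intended argument is visible from the machinery of the last section: $SO(n)$-invariance of a Lagrangian in $\C^n$ forces the local normal form $\phi(t,x)=\alpha(t)\,x$ with $x\in\s^{n-1}$ and $\alpha$ a plane curve, so $\phi$ is pseudoumbilical with totally geodesic Legendrian factor; Theorem~\ref{LagrPseudo} then reduces the soliton equation to the curve equation for $\alpha^n$ (this is exactly Corollary~\ref{LagrRev}), and closedness forces $\alpha^n$ to be a closed soliton for the inverse curve shortening flow, hence a circle by Andrews' theorem, recovering Example~\ref{Anciaux0} with $a=1/n$. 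In other words, the paper pins down the Legendrian factor first (via the rotationally invariant normal form) and then determines the curve. You do the opposite: you invoke Theorem~\ref{ClosedLagr} to fix the curve as $e^{it}$ from the start, and then determine the minimal Legendrian factor $\psi$ by projecting to $\C P^{n-1}$ and analyzing the $SO(n)$-orbit stratification there (generic orbits of dimension $2n-3$, the quadric $Q$ of dimension $2n-4$, and $\R P^{n-1}$), concluding $\overline{\psi}(N)\subset\R P^{n-1}$ and hence $N=\s^{n-1}$ with $\psi$ the totally geodesic embedding. Your route buys independence from the local classification of $SO(n)$-invariant Lagrangians (and from the curve ODE analysis), at the cost of an equivariant-geometry argument; the paper's route is more computational but simultaneously yields the full non-compact classification of Corollary~\ref{LagrRev}. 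Two small points to tighten: in the $n=3$ case, passing from ``$Q\subset\overline{\psi}(N)$'' to ``some tangent plane of $\overline{\psi}$ equals a tangent plane of $Q$'' needs a short Baire-category (or constant-rank) argument, since $\overline{\psi}$ is only an immersion; and the horizontal leaf you land in is a priori $e^{i\theta}\s^{n-1}$ for some phase $\theta$, which should be absorbed by reparametrizing the $\s^1$ factor rather than by a dilation. Neither affects the validity of the proof.
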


\vspace{0.3cm}

\section{Lagrangian homothetic solitons for the IMCF}

The simplest Lagrangian submanifolds in complex Euclidean space $\C^n$ are plane curves, corresponding to $n=1$.
Taking into account Remark \ref{iff}, for a given plane regular curve $\alpha =\alpha (t)$, $t\in I \subseteq \R$, the equation (\ref{IMCFsol}) reduces to 
\begin{equation}\label{ICSFsol}
a \kappa \langle \alpha', J \alpha  \rangle =|\alpha'|
\end{equation}
where $J$ denotes the $+\pi/2$-rotation in $\C$ and $\kappa$ is the (signed) curvature of $\alpha$.
The homothetic solitons for the inverse curve shortening flow are the solutions of (\ref{ICSFsol}).
Andrews proved in \cite{And03} that circles centered at the origin are the only compact regular solutions of (\ref{ICSFsol}). The classification of all the homothetic soliton curves was given in \cite{DLW15}
and they are explicitly described using the slope $\theta $ of the tangent vector as parameter for $\alpha (\theta)= (x(\theta),y(\theta))$, where
\begin{eqnarray}\label{curves}
x(\theta)=-\nu'(\theta)\cos\theta - \nu(\theta) \sin \theta \\
y(\theta)=-\nu'(\theta)\sin\theta + \nu(\theta) \cos \theta \nonumber
\end{eqnarray}
with
\begin{equation}\label{nu}
\nu(\theta)= \left\{ 
\begin{array}{ll}
c_1 \cos (\sqrt{1-a}\,\theta)+ c_2 \sin (\sqrt{1-a}\, \theta) & {\rm if } \ a<1 \\
c_1+c_2 \, \theta & {\rm if } \ a=1 \\
c_1 \cosh (\sqrt{a-1}\, \theta) + c_2 \sinh (\sqrt{a-1}\, \theta) & {\rm if } \ a>1
\end{array}
\right.
\end{equation}
with $\theta\in \R$ and $c_1,c_2 \in \R$. 
\begin{remark}\label{ICSFcurves}
{\rm 
It is mentioned in \cite{DLW15} that when $a=1$, if $c_2=0$ we get the circle of radius $|c_1|$ and if $c_2 \neq 0$ it becomes the involute of the circle of radius $|c_2|$. In addition, when $a>1$, if $c_1=c_2=1$ the classical logarithmic spirals appear (see Figure \ref{InvoluteSpiral}).

We observe that when $a<1$ taking $a=1-m^2/k^2$, $m,k\in \N$, $m\neq k$, we get epicycloids (see Figure \ref{Epicycloids}) if $m<k $, i.e.\ $0<a<1$, and  hypocycloids (see Figure \ref{Hypocycloids}) if $m>k $ 
i.e.\ $a<0$. See also Figure \ref{together}.
}
\end{remark}

\begin{figure}[h!]
\begin{center}
\includegraphics[height=3cm]{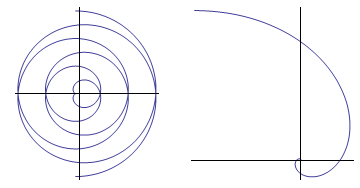}
\caption{Involute of a circle and a logarithmic spiral}
\label{InvoluteSpiral}
\end{center}
\end{figure}

\begin{figure}[h!]
\begin{center}
\includegraphics[height=3cm]{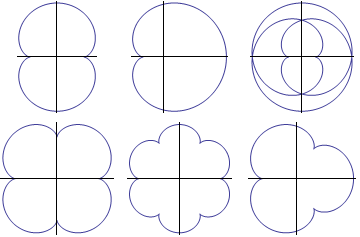}
\caption{Epicycloids}
\label{Epicycloids}
\end{center}
\end{figure}

\begin{figure}[h!]
\begin{center}
\includegraphics[height=3cm]{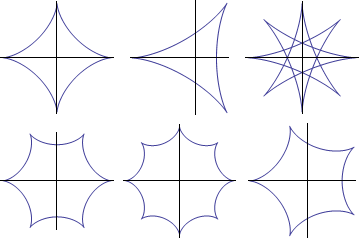}
\caption{Hypocycloids}
\label{Hypocycloids}
\end{center}
\end{figure}

\begin{figure}[h!]
\begin{center}
\includegraphics[height=5cm]{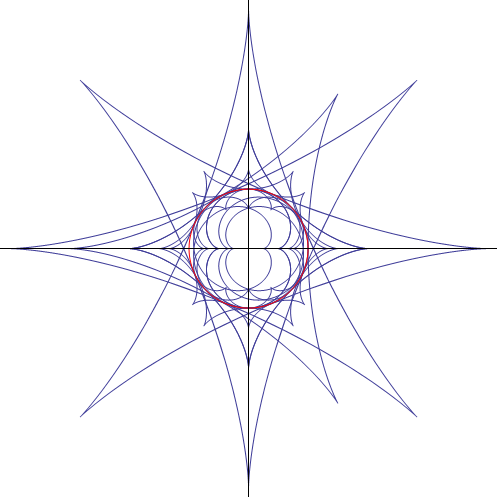}
\caption{Epicycloids, circle and hypocicloids together}
\label{together}
\end{center}
\end{figure}

Making the Cartesian product of $n$ plane curves we get flat Lagrangians in $\C^n$. The Example \ref{ProductCircles} is obtained using simply $n$ circles (of the same radius) centered at the origin. 
In the following result we look for all the Lagrangian homothetic solitons for the IMCF constructed in this way.

\begin{proposition}\label{LagrProduct}
The Lagrangian homothetic solitons for the inverse mean curvature flow with constant velocity $a\neq 0$
that are product of $n$-plane curves are given (up to dilations and congruences) by
\begin{enumerate}
\item $\s^1 \times \stackrel{n)}{\dots} \times \,\s^1 \hookrightarrow \C^{n}$, with $a=1/n$.
\item $\s^1 \times \stackrel{k)}{\dots} \times \,\s^1 \!\times \R^{n-k} \hookrightarrow \C^{n}$, $1\leq k \leq n-1$, with $a=1/k$.
\item $\mathcal C \times \R^{n-1}\hookrightarrow \C^{n}$, where $\mathcal C$ is a homothetic soliton for the inverse curve shortening flow, with $a\neq 0$ the constant velocity of $\mathcal C$.
\end{enumerate}
\end{proposition}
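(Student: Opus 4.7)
The plan is to exploit the product structure directly. Writing $\phi(t_1,\dots,t_n)=(\alpha_1(t_1),\dots,\alpha_n(t_n))$ with each $\alpha_i$ in arclength parametrization and curvature $\kappa_i(t_i)$, the identities $\partial_{t_i}\partial_{t_j}\phi=0$ for $i\neq j$ and $\partial^2_{t_i}\phi=(0,\dots,\kappa_iN_i,\dots,0)$ (automatically normal to the product immersion) imply
\[
H=(\kappa_1N_1,\dots,\kappa_nN_n),\qquad \phi^\perp=(\alpha_1^\perp,\dots,\alpha_n^\perp),\qquad \alpha_i^\perp=\langle\alpha_i,N_i\rangle\,N_i.
\]
Projecting (\ref{IMCFsol}) onto each $N_i$ turns the soliton condition into the coupled system
\[
-\kappa_i(t_i)=a\,\langle\alpha_i(t_i),N_i(t_i)\rangle\sum_{j=1}^n\kappa_j(t_j)^2,\qquad i=1,\dots,n,
\]
in which variables from different factors interact only through the scalar $|H|^2=\sum_j\kappa_j^2$.

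Next I would split the indices into $S=\{i:\kappa_i\not\equiv 0\}$ and its complement. For $i\notin S$ the factor $\alpha_i$ is a straight line $p_i+t_iv_i$ with constant normal, and the equation reduces to $\langle p_i,N_i\rangle=0$, so after reparametrization the line passes through the origin. For $i\in S$ I rewrite the equation as $|H|^2=-\kappa_i(t_i)/\bigl(a\langle\alpha_i(t_i),N_i(t_i)\rangle\bigr)$: the right-hand side depends only on $t_i$ while the left depends on every $t_j$ with $j\in S$, so if $|S|\geq 2$ the separation-of-variables argument (ranging $i$ over two distinct elements of $S$) forces every $\kappa_j$, $j\in S$, to be constant.

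A factor of constant nonzero curvature is a circle, and the further requirement that $\langle\alpha_j,N_j\rangle$ be constant along it forces that circle to be centered at the origin. Substituting $\kappa_j=1/r_j$ and $\langle\alpha_j,N_j\rangle=-r_j$ back into the system gives $1/(ar_j^2)=\sum_{\ell\in S}1/r_\ell^2$; since the right-hand side is independent of $j$, all radii coincide, so after a dilation and a congruence aligning the line factors with coordinate directions we obtain case~(1) when $|S|=n$ and case~(2) with $k=|S|$ when $1\le|S|\le n-1$, both with $a=1/|S|$. When $|S|=1$ the system collapses to the single scalar equation (\ref{ICSFsol}) for the unique non-trivial factor with its own velocity $a$, while the remaining factors supply $\R^{n-1}$ through the origin, giving case~(3); the case $|S|=0$ forces $H=0$ and is excluded.

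The main obstacle I anticipate is phrasing the separation-of-variables step cleanly enough to simultaneously handle the $|S|\geq 2$ regime (where it rigidly pins down constant curvatures, which the subsequent algebraic identity on $1/r_j^2$ then equalises into a single radius) and the $|S|=1$ regime (where the coupling must decouple to reproduce the \emph{full} ICSF equation, not some more restrictive condition). The sign conventions in the identification $\langle\alpha_j,N_j\rangle=-r_j$ for a circle centered at the origin, and the congruence aligning the line factors with coordinate axes, are routine.
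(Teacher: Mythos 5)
Your proposal is correct and follows essentially the same route as the paper: the componentwise formulas for $H$ and $\phi^\perp$, the reduction to the coupled system $\kappa_i=a\,\langle\alpha_i',J\alpha_i\rangle\sum_j\kappa_j^2$ (yours is the same up to the sign convention $N_i=J\alpha_i'$), the separation-of-variables step forcing constant curvatures when at least two factors are curved, and the identification of constant-curvature factors as circles centered at the origin with equal radii. Your bookkeeping via the set $S$ and the explicit remark that lines must pass through the origin only make the paper's case analysis slightly more systematic; there is no substantive difference.
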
 
\begin{proof}
Let $\alpha_1,\dots,\alpha_n$  be plane regular curves, and denote by $s_j$
the arc length parameter of $\alpha_j$, $1 \leq j \leq n$. Then the
product immersion
$$\phi(s_1, ... , s_n)=(\alpha_1(s_1), ... , \alpha_n(s_n))$$ is a
flat Lagrangian immersion whose mean curvature vector is 
\[
H = \sum_{j=1}^n \kappa_j J\phi_{s_j}
\]
where $\kappa_j$ denotes the curvature of the curve $\alpha_j$,
$1 \leq j \leq n$. On the other hand, it is easy to check that 
\[
\phi^\perp = -\sum_{j=1}^n  \langle \alpha_j ', J \alpha_j  \rangle J\phi_{s_j}.
\]
Thus we have that $\phi$ satisfies (\ref{IMCFsol}) if and only if 
\begin{equation}\label{SolitonLagrProduct}
\frac{\kappa_j}{\kappa_1^2+\dots+\kappa_n^2}=a \langle \alpha_j ', J \alpha_j \rangle, \ 1 \leq j \leq n .
\end{equation}
According to (\ref{SolitonLagrProduct}), $ \langle \alpha_j ', J \alpha_j \rangle=0 \Leftrightarrow \kappa_j=0$ since $a\neq 0$. But if $\langle \alpha_j ', J \alpha_j \rangle=0$  $\forall j$, $1 \leq j \leq n$, then $H=0$. So we can assume that $\langle \alpha_1 ', J \alpha_1 \rangle \neq 0$ and then (\ref{SolitonLagrProduct}) implies that $\kappa_i, \ 2 \leq i \leq n$, are constant. 

If $\kappa_i=0$  $\forall i$, $2 \leq i \leq n$, then (\ref{SolitonLagrProduct}) gives that $a\kappa_1 \langle \alpha_1 ', J \alpha_1 \rangle = 1$, which means that $\alpha_1$ is a soliton with constant velocity $a$ according to (\ref{ICSFsol}). In this case we arrive at $\mathcal C \times \R^{n-1}$, where $\mathcal C$ is a homothetic soliton for the inverse curve shortening flow with constant velocity $a\neq 0$.

Otherwise, we can take $\kappa_l=1/R_l$, $1\leq l \leq k$ and $\kappa_m=0$, $k+1 \leq m \leq n$, with $1\leq k \leq n$. Up to translations, we can write $\alpha_l(s_l)=R_l e^{i s_l/R_l}$, $1\leq l \leq k$, and now (\ref{SolitonLagrProduct}) gives 
\[
\frac{\kappa_l^2}{\kappa_1^2+\dots+\kappa_k^2}=a, \ 1\leq l \leq k
\]
This means that $R_1=\dots =R_k$ and, in addition, $a=1/k$. In this way, up to dilations, we get $\s^1 \times \stackrel{n)}{\dots} \times \,\s^1$ if $k=n$ and $\s^1 \times \stackrel{k)}{\dots} \times \,\s^1 \!\times \R^{n-k}$ if $k<n$.
\end{proof}
\vspace{0.2cm}

The Lagrangian expanders for the IMCF described in Theorem~\ref{ClosedLagr} belong to a bigger class of Lagrangian homothetic solitons that will be also studied in this section. We start recalling the notion of pseudoumbilical Lagrangians introduced in \cite{RU98} and \cite{CMU01}. A Lagrangian submanifold $M^n$ in $\C^n$ is called {\em pseudoumbilical} if there exists a non parallel closed conformal vector field $X$ on $M$ such that $\sigma (X,X)=\rho JX$ for some smooth function $\rho$. It was proved in Corollary~1 of \cite{RU98} that the pseudoumbilical Lagrangians in $\C^n$ are locally given by the Lagrangian immersions  
\[ \phi : I\times N\longrightarrow \C^n ,  \quad
\phi(t,x)  = \alpha(t)\psi(x)\]
where  $\alpha:I\rightarrow \C^*$ is a regular plane curve and $\psi:N\longrightarrow\s^{2n-1}\subset \C^n$
is a Legendrian immersion of a Riemannian $(n-1)$-manifold $(N,g)$.
The closed Lagrangian expanders for the IMCF described in Theorem~\ref{ClosedLagr} are obtained by considering the {\it circle} $\alpha (t)=e^{it}$ and a {\it minimal} Legendrian immersion $\psi$ of a closed $(n-1)$-manifold $N$.

The main result of this section provides the local classification of the Lagrangian homothetic solitons for the IMCF which are pseudoumbilical. 
\begin{theorem}\label{LagrPseudo}
Let $\phi : M^n \rightarrow \C^n$ be a Lagrangian pseudoumbilical  homothetic soliton for the inverse mean
curvature flow with constant velocity $a$. Then $\phi$ is locally congruent to 
$$
I \times N\longrightarrow \C^n , 
\quad (t,x) \mapsto \alpha(t)\psi(x)
$$
where $\psi:N\longrightarrow\s^{2n-1}\subset \C^n$
is a minimal Legendrian immersion of a Riemannian $(n-1)$-manifold $N$ and $\alpha:I\rightarrow \C^* $ is a regular plane curve such that $\alpha ^n$ is a homothetic soliton for the inverse curve shortening flow with constant velocity $n\,a$.
\end{theorem}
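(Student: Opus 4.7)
The plan is to deploy the Ros--Urbano structure theorem for pseudoumbilical Lagrangians and then split the soliton equation (\ref{IMCFsol}) into a part tangential to $\psi$ and a part along $\alpha$, so that one part forces $\psi$ to be minimal and the other becomes precisely (\ref{ICSFsol}) for $\alpha^n$ with velocity $na$. By Corollary~1 of \cite{RU98} we may write $\phi(t,x) = \alpha(t)\psi(x)$ with $\alpha:I\to\C^*$ a regular plane curve and $\psi:N\to\s^{2n-1}\subset\C^n$ a Legendrian immersion of a Riemannian $(n-1)$-manifold $N$; the induced metric is the warped product $|\alpha'|^2\,dt^2 + |\alpha|^2 g_N$, and one may work throughout in this decomposition.

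A direct computation using the Legendrian conditions $\langle\psi_{x_i},\psi\rangle = \langle\psi_{x_i}, J\psi\rangle = 0$ shows that $\phi^\bot$ is purely in the $J\phi_t$-direction,
\[
\phi^\bot \;=\; -\frac{\langle\alpha',J\alpha\rangle}{|\alpha'|^2}\,J\phi_t.
\]
For the mean curvature I would compute the Lagrangian angle from $\det_\C(\phi_t,\phi_{x_1},\dots,\phi_{x_{n-1}}) = \alpha'\alpha^{n-1}e^{i\theta_\psi}$, obtaining the key identity $\theta_\phi = \arg((\alpha^n)') + \theta_\psi$. Since $H = J\nabla\theta_\phi$ and the warped metric decouples the $t$- and $x$-directions, $H$ splits orthogonally into a $J\phi_t$-piece whose coefficient involves only the curvature $\kappa_\gamma$ of $\gamma := \alpha^n$, and a $J\phi_x$-piece that is a rescaling of $J\psi_\ast\nabla^N\theta_\psi$, i.e.\ the mean curvature vector of the projection $\Pi\circ\psi$ in $\C P^{n-1}$.

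Because $\phi^\bot$ has no $J\phi_x$-component, the soliton equation immediately forces $\nabla\theta_\psi \equiv 0$, i.e.\ $\psi$ is a minimal Legendrian immersion, exactly as in the proof of Theorem~\ref{ClosedLagr}. The surviving scalar equation along $J\phi_t$, after using $|H|^2 = \kappa_\gamma^2|\gamma'|^2/|\alpha'|^2$ together with the elementary identities $|\gamma'| = n|\alpha|^{n-1}|\alpha'|$ and $\langle\gamma',J\gamma\rangle = n|\alpha|^{2(n-1)}\langle\alpha',J\alpha\rangle$ obtained from $\gamma' = n\alpha^{n-1}\alpha'$, becomes precisely (\ref{ICSFsol}) for $\gamma = \alpha^n$ with constant velocity $na$. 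The main obstacle is bookkeeping: unlike in Theorem~\ref{ClosedLagr}, $\alpha$ is now a general curve in $\C^*$, so the curvature of $\gamma$ picks up contributions both from $\kappa_\alpha$ and from the winding $\arg\alpha$; packaging these through the Lagrangian angle identity $\theta_\phi = \arg((\alpha^n)') + \theta_\psi$ is what makes the $n$-th power appear cleanly in the conclusion and avoids a messy explicit computation of $\kappa_{\alpha^n}$ in terms of $\kappa_\alpha$.
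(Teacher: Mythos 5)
Your proposal is correct and follows essentially the same route as the paper: the Ros--Urbano structure theorem, the warped-product metric and angle formula $\theta_\phi = G_\alpha + \theta_\psi$, the computation of $\phi^\perp$ along $J\phi_t$ forcing $\nabla\theta_\psi=0$, and the reduction of the remaining scalar equation to (\ref{ICSFsol}) for $\alpha^n$ with velocity $na$. Your only (welcome) streamlining is to recognize $G_\alpha = \arg\bigl((\alpha^n)'\bigr)$ from the outset, so that $G'_\alpha = \kappa_{\alpha^n}|(\alpha^n)'|$ directly, whereas the paper first expands $G'_\alpha$ in terms of $\kappa_\alpha$ and $\arg\alpha$ in (\ref{eq alpha 2}) and then verifies the substitution $\beta=\alpha^n$.
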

\begin{proof}
Thanks to the aforementioned local classification of pseudoumbilical Lagrangians, we must simply study when $\phi(t,x)=\alpha(t)\psi(x)$ verifies (\ref{IMCFsol}), with $\alpha $ a regular plane curve and $\psi:N\rightarrow\s^{2n-1}\subset \C^n$ a Legendrian immersion of a Riemannian $(n-1)$-manifold $(N,g)$.
In Theorem 4.1.\ of \cite{AC11} one can find a complete study of this type of Lagrangian immersions. Concretely we will use that the induced metric is 
\begin{equation}\label{metric}
\phi^* \langle \cdot, \cdot \rangle = |\alpha ' |^2 dt^2 \times |\alpha|^2  g,
\end{equation}
and the Lagrangian angle map of $\phi$ is given by
\begin{equation}\label{angle}
\theta_\phi (t,x) =G_\alpha(t)+ \theta_\psi(x) ,
\end{equation}
where $G_\alpha:= \arg \alpha' + (n-1) \arg \alpha$ and $\theta_\psi$
is the Legendrian angle map of $\psi$ (see for instance Section 2 in \cite{AC11} for background on Legendrian immersions). Using (\ref{metric}) and the Legendrian character of $\psi$, we get that
\begin{equation}\label{normalphi}
\phi^\perp = -\frac{\langle \alpha',J\alpha \rangle}{|\alpha'|^2} J\phi_t
\end{equation}
On the other hand, taking (\ref{Hlagr}), (\ref{metric}) and (\ref{angle}) into account, looking at (\ref{normalphi}) it is clear that for $H$ being collinear with $\phi^\perp$ it is necessary the minimality of $\psi $ and so
\begin{equation}\label{Hphi}
H = \frac{G'_\alpha}{|\alpha'|^2} J\phi_t
\end{equation}
Hence $\phi$ satisfies (\ref{IMCFsol}) if and only if $a \langle \alpha',J\alpha \rangle G'_\alpha = |\alpha'|^2$. As a summary, $\phi=\alpha\, \psi$ is a homothetic soliton for the IMCF with constant velocity $a$ if and only if $\psi$ is minimal and $\alpha$ satisfies
\begin{equation}\label{eq alpha}
a \langle \alpha',J\alpha \rangle \left(\arg \alpha' + (n-1) \arg \alpha \right)'=|\alpha'|^2 
\end{equation}
that is equivalent to 
\begin{equation}\label{eq alpha 2}
a \langle \alpha',J\alpha \rangle \left(|\alpha'|\kappa_\alpha + (n-1) \frac{\langle \alpha',J\alpha \rangle}{|\alpha|^2} \right)=|\alpha'|^2 
\end{equation}
where $\kappa_\alpha$ is the curvature of $\alpha$. But (\ref{eq alpha 2}) has a simpler expression by considering $\beta:= \alpha^n$. It is not difficult to check that (\ref{eq alpha 2}) is equivalent to 
\begin{equation}\label{eq beta}
n\, a \langle \beta',J\beta \rangle \kappa_\beta =|\beta'|
\end{equation}
Taking into account (\ref{ICSFsol}), we have that (\ref{eq beta}) implies that $\beta=\alpha ^n$ is a homothetic soliton for the one dimensional IMCF with constant velocity $n\,a$.
\end{proof}

If we take in Theorem~\ref{LagrPseudo} the totally geodesic Legendrian immersion $\psi:\s^{n-1}\rightarrow\s^{2n-1}\subset \C^n$, $\psi (x)=x$, we get $SO(n)$-invariant Lagrangian homothetic solitons for the IMCF that play the role of the equivariant Lagrangian self-similar solutions to the mean curvature flow in $\C^n$ studied in \cite{An06}.

\begin{corollary}\label{LagrRev}
Let $\phi : M^n \rightarrow \C^n$ be a Lagrangian homothetic soliton for the inverse mean
curvature flow. If $\phi $ is invariant under the action of the
special orthogonal group given in (\ref{eq:SOn}) then $\phi $ is, up to dilations, the immersion \,$\R \times \s^{n-1} \rightarrow \C^n$ given by
\[
(\theta,x) \mapsto |\alpha(\theta)|^{1/n}\, e^{i\arg \alpha (\theta) /n} (x_1,\dots,x_n)
\]
where $\alpha (\theta)= \left( -\vartheta'(\theta)\cos\theta - \vartheta(\theta) \sin \theta ,
-\vartheta'(\theta)\sin\theta + \vartheta(\theta) \cos \theta \right)$, with
\[
\vartheta(\theta)= \left\{ 
\begin{array}{ll}
c_1 \cos (\sqrt{1-a}\,\theta)+ c_2 \sin (\sqrt{1-a}\, \theta) & {\rm if } \ a<1/n \\
c_1+c_2 \, \theta & {\rm if } \ a=1/n \\
c_1 \cosh (\sqrt{a-1}\, \theta) + c_2 \sinh (\sqrt{a-1}\, \theta) & {\rm if } \ a>1/n
\end{array}
\right.
\]
and $c_1,c_2 \in \R$. 
\end{corollary}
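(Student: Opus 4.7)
The plan is to combine Theorem~\ref{LagrPseudo} with the explicit classification of plane homothetic solitons given in (\ref{curves})--(\ref{nu}).

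First, I would use the $SO(n)$-invariance to reduce $\phi$ to the pseudoumbilical ansatz of Theorem~\ref{LagrPseudo} with $\psi$ being the totally geodesic Legendrian embedding $\psi:\s^{n-1}\to\s^{2n-1}$, $\psi(x)=x$. The $SO(n)$-action of (\ref{eq:SOn}) on $\C^n\setminus\{0\}$ has $(n-1)$-dimensional principal orbits precisely of the form $\{\alpha_0\,x : x\in\s^{n-1}\}$ for $\alpha_0\in\C^*$ (the stabilizer of $\alpha_0(1,0,\dots,0)$ being $SO(n-1)$), while every other orbit has dimension at least $n$. Since $M^n$ is $n$-dimensional and $SO(n)$-invariant, it cannot accommodate higher-dimensional orbits, so its generic orbits must be $(n-1)$-spheres of the above type. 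Parametrizing the one-parameter family of orbits by a regular plane curve $\alpha:I\to\C^*$ yields the local representation $\phi(t,x)=\alpha(t)\,x$, which is exactly the pseudoumbilical form appearing in Theorem~\ref{LagrPseudo}.

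Next, since $\psi(x)=x$ is automatically minimal (in fact totally geodesic), Theorem~\ref{LagrPseudo} applies and shows that $\phi$ is a homothetic soliton for the IMCF with velocity $a$ if and only if the plane curve $\beta:=\alpha^n$ is a homothetic soliton for the inverse curve shortening flow with velocity $n\,a$.

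Finally, the description (\ref{curves})--(\ref{nu}), specialized to velocity $n\,a$, gives the explicit form of $\beta$ in terms of its tangent-angle parameter $\theta$, splitting into the three cases $n\,a<1$, $n\,a=1$, $n\,a>1$ (equivalently $a<1/n$, $a=1/n$, $a>1/n$). Since $\beta$ avoids the origin (by Lemma~\ref{properties}(a), $\langle H,\beta\rangle=-1/(n\,a)\neq 0$), the $n$-th root $\alpha=|\beta|^{1/n}e^{i\arg\beta/n}$ is well defined and regular on simply connected pieces, and substituting into $\phi(\theta,x)=\alpha(\theta)\,x$ gives the parametric formula in the statement.

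The main (and essentially only) obstacle is the orbit analysis in the first step: one must confirm that no $SO(n)$-orbit of dimension $\geq n$ can be contained in a Lagrangian $n$-manifold, thereby forcing the product ansatz $\phi(t,x)=\alpha(t)\,x$. Once this reduction is in hand, the remainder is a direct application of Theorem~\ref{LagrPseudo} followed by the explicit one-dimensional classification in (\ref{curves})--(\ref{nu}).
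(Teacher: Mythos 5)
Your overall route---reduce an $SO(n)$-invariant Lagrangian to the ansatz $\phi(t,x)=\alpha(t)x$, apply Theorem~\ref{LagrPseudo} with the totally geodesic Legendrian $\psi(x)=x$, and then read off $\beta=\alpha^{n}$ from the explicit classification (\ref{curves})--(\ref{nu}) at velocity $na$---is exactly the paper's route, and the second and third steps are complete and correct. (Incidentally, your computation honestly carried out gives frequencies $\sqrt{1-na}$ and $\sqrt{na-1}$ in the trigonometric and hyperbolic cases, consistent with the case split $a<1/n$, $a=1/n$, $a>1/n$.)

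The gap is the one you flag yourself, and your proposed mechanism for closing it---a pure orbit-dimension count---does not suffice. For $z=u+iv$ with $u,v\in\R^{n}$ linearly independent, the stabilizer is $SO(n-2)$ and the orbit has dimension $2n-3$; for $n=3$ this equals $n$, so an $n$-dimensional invariant submanifold could a priori be an open piece of a single such orbit, and for $n=2$ every orbit is $1$-dimensional, so the dichotomy ``$(n-1)$-spheres of the form $\{\alpha_0 x\}$ versus orbits of dimension $\geq n$'' fails outright (the $SO(2)$-orbit of $(1,i)$ is a circle not of that form). What actually forces $u\parallel v$ is the Lagrangian condition: the orbit tangents $zX$, $X\in\mathfrak{so}(n)$, lie in $d\phi(TM)$, so $0=\omega(zX,zY)=-\Im\bigl(zXY\bar z^{\,t}\bigr)$ for all $X,Y\in\mathfrak{so}(n)$; running $X,Y$ over the canonical basis of $\mathfrak{so}(n)$ shows, for $n\geq 3$, that $\Re z$ and $\Im z$ are proportional, hence $z=\alpha_0 x$ with $x\in\s^{n-1}$, and transitivity of $SO(n)$ on $\s^{n-1}$ yields $\phi(t,x)=\alpha(t)x$. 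For $n=2$ this identity is vacuous ($\dim\mathfrak{so}(2)=1$) and one needs a separate argument, e.g.\ via the Killing field induced by the $SO(2)$-action together with the Lagrangian and conformality conditions. So replace the dimension count by this symplectic computation and treat $n=2$ separately; the rest of your argument stands.
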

According to Corollary \ref{ClosedLagrSO(n)}, the only closed one in the above family is the Hopf immersion described in Example \ref{Anciaux0}, which corresponds to $a=1/n$, $c_2=0$ and $|c_1|=n^{n/2}$.
\vspace{0.3cm}


\begin{thebibliography}{1}\bibliographystyle{alpha}

\bibitem[An06]{An06} H.~Anciaux.
\newblock {\em Construction of Lagrangian self-similar solutions to the mean curvature flow
in $\C^n$.}
\newblock  Geom.\ Dedicata {\bf 120} (2006), 37--48.

\bibitem[AC11]{AC11} H.~Anciaux and I.~Castro.
\newblock {\em Construction of Hamiltonian-minimal Lagrangian submanifolds in complex Euclidean space.}
\newblock  Results Math.\ {\bf 60} (2011), 325--349.

\bibitem[And03]{And03} B.~Andrews.
\newblock {\em Classifications of limiting shapes for isotropic curve flows.}
\newblock J.\ Amer.\ Math.\ Soc.\ {\bf 16} (2003), 443--459.

\bibitem[BG04]{BG04} V.~Borrelli and C.~Gorodski.
\newblock {\em Minimal Legendrian submanifolds of $\s^{2n+1}$ and absolutely area-minimizing cones.}
\newblock Diff.\ Geom.\ Appl.\  {\bf 21} (2004), 337--347.

\bibitem[Br13]{Br13} S.~Brendle.
\newblock {\em Embedded minimal tori in $\s^3$ and the Lawson conjecture.}
\newblock  Acta Math. {\bf 211} (2013), 177--190.

\bibitem[BN04]{BN04} H.~Bray and A.~Neves.
\newblock {\em Classification of prime 3-manifolds with Yamabe invariant greater than $\R P^3$.}
\newblock Ann.\ of Math.\ {\bf 159} (2004), 407--424.

\bibitem[CL14]{CL14} I.~Castro and A.M.~Lerma.
\newblock {\em The Clifford torus as a self-shrinker for the Lagrangian mean curvature flow.}
\newblock Int.\ Math.\ Res.\ Not.\ {\bf 2014} (2014), 1515--1527.

\bibitem[CLU06]{CLU06} I.~Castro, H. Li and F.~Urbano.
\newblock {\em Hamiltonian minimal Lagrangian submanifolds in complex space forms.}
\newblock Pacific J.\ Math.\ {\bf 227} (2006), 43--65.

\bibitem[CMU01]{CMU01} I.~Castro, C.R.Montealegre and F.~Urbano.
\newblock {\em Closed conformal vector fields and Lagrangian submanifolds in complex space forms.}
\newblock Pacific J.\ Math.\ {\bf 199} (2001), 269--302.

\bibitem[CU04]{CU04} I.~Castro and F.~Urbano.
\newblock {\em On a new construction of special Lagrangian immersions in complex Euclidean space.}
\newblock Quart.\ J.\ Math.\ {\bf 55} (2004), 253--265.

\bibitem[Ch13]{Ch13} B.-Y.~Chen.
\newblock {\em Classification of spherical Lagrangian submanifolds in complex Euclidean spaces.}
\newblock  Int.\ Electron.\ J.\ Geom.\ {\bf 6} (2013), 1--8.

\bibitem[CdCK78]{CdCK78} S.S.~Chern, M.~Do Carmo and S.~Kobayashi.
\newblock {\em Minimal submanifolds of sphere with second fundamental form of constant length.}
\newblock Shiing-Shen Chern Selected Papers, Springer-Verlag 1978, pp. 393--409.

\bibitem[DLW15]{DLW15} G.~Drugan, H.~Lee and G.~Wheeler.
\newblock {\em Solitons for the inverse mean curvature flow}
\newblock  arXiv: 1505.00183v1 [math.DG].

\bibitem[Ge90]{Ge90} C.~Gerhardt.
\newblock {\em Flow of nonconvex hypersurfaces into spheres.}
\newblock J.\ Differential Geom.\  {\bf 32} (1990), 299--314.

\bibitem[HL82]{HL82} R.~Harvey and H.B.~ Lawson.
\newblock {\em Calibrated geometries.}
\newblock  Acta Math.\  {\bf 148} (1982), 47--157.

\bibitem[HI97]{HI97} G.~Huisken and T.~Ilmanen.
\newblock {\em The Riemannian Penrose inequality.}
\newblock Int.\ Math.\ Res.\ Not.\ {\bf 20} (1997), 1045--1058.

\bibitem[HI01]{HI01} G.~Huisken and T.~Ilmanen.
\newblock {\em The  inverse mean curvature flow and the Riemannian Penrose inequality.}
\newblock J.\ Differential Geom.\  {\bf 59} (2001), 353--437.

\bibitem[HI08]{HI08} G.~Huisken and T.~Ilmanen.
\newblock {\em Higher regularity of the inverse mean curvature flow.}
\newblock J.\ Differential Geom.\  {\bf 80} (2008), 433--451.

\bibitem[La69]{La69} H.B.~Lawson.
\newblock {\em Local rigidity theorems for minimal hypersurfaces.}
\newblock  Ann.\ of Math.\ {\bf 89} (1969), 187--197.

\bibitem[La70]{La70} H.B.~Lawson.
\newblock {\em Complete minimal surfaces in $\s^3$.}
\newblock  Ann.\ of Math.\ {\bf 92} (1970), 335--374.

\bibitem[LZ94]{LZ94} A.M.~Li and G.~Zhao.
\newblock {\em Totally real submanifolds in $\C P^n$.}
\newblock  Arch.\ Math.\  {\bf 62} (1994), 562--568.

\bibitem[Na81]{Na81} H.~Naitoh.
\newblock {\em Totally real parallel submanifolds in $P^n(\C)$}
\newblock Tokyo J.\ Math.\ {\bf 4} (1981), 279--306.

\bibitem[Pi85]{Pi85} U.~Pinkall.
\newblock {\em Hopf tori in $\s^3$.}
\newblock  Invent.\ Math.\ {\bf 81} (1985), no 2, 379--386

\bibitem[RU98]{RU98} A.~Ros and F.~Urbano.
\newblock {\em Lagrangian submanifolds of $\C^n$ with conformal Maslov form and the Whitney sphere.}
\newblock J.\ Math.\ Soc.\ Japan {\bf 50} (1998), 203--226.

\bibitem[Si68]{Si68} J.~Simons.
\newblock {\em Minimal varieties in Riemannian manifolds.}
\newblock Ann.\ of Math.\ {\bf 88} (1968), 62--105.

\bibitem[Sm90]{Sm90} K.~Smoczyk.
\newblock {\em Remarks on the inverse mean curvature flow.}
\newblock Asian J.\ Math.\ {\bf 4} (2000), 331--336.

\bibitem[Ur90]{Ur90} J.~Urbas.
\newblock {\em On the expansion of starshaped hypersurfaces by symmetric functions of their principal curvatures.}
\newblock Math.\ Z.\ {\bf 205} (1990), 355--372.

\bibitem[Xi92]{Xi92} C.~Xia.
\newblock {\em On the minimal submanifolds in $\C P^m (c)$ and $\s^N (1)$.}
\newblock  Kodai Math.\ J.\ {\bf 15} (1992), 141--153.

\bibitem[Ya74]{Ya74} S.T.~Yau.
\newblock {\em Submanifolds with constant mean curvature I.}
\newblock Amer.\ J.\ Math.\ {\bf 96} (1974), 346--366.


\end{thebibliography}
\end{document}